\newcommand{\R}{\mathbb{{R}}}
\newtheorem{theorem}{Theorem}
\newtheorem{lemma}{Lemma}
\theoremstyle{definition}
\newtheorem{example}{Example}
\newtheorem{prop}{Proposition}
\newtheorem{cor}[theorem]{Corollary}
\newcommand{\cal}{\mathcal}
\newcommand{\dsize}{\displaystyle}
\theoremstyle{remark}
\newtheorem{remark}[theorem]{Remark}
\newcommand{\M}{\cal M}
\newcommand{\F}{\cal F}
\renewcommand{\L}{\widehat{L}}
\begin{document}
\title{$L^p$ simulation for measures}
\author{L. De Carli and E. Liflyand}

\address{Department of Mathematics and Statistics, Florida International University, Miami, FL, 33199, USA}
\email{decarlil@fiu.edu}

\address{Department of Mathematics, Bar-Ilan University, 52900 Ramat-Gan, Israel}
\email{liflyand@gmail.com}

\keywords{Measure; Fourier transform; Hausdorff-Young inequality; Young inequality; uncertainty principle}

\subjclass[2020]{Primary: 28A33; Secondary: 42A38}

\begin{abstract} Being motivated by general interest as well as by certain concrete problems of Fourier Analysis,
we construct analogs of the $L^p$ spaces for measures. It turns out that most of standard properties of the usual
$L^p$ spaces for functions are extended to the measure setting. We illustrate the obtained results by examples
and apply them to obtain a version of the uncertainty principle and an integrability result for the Fourier
transform of a function of bounded variation.   \end{abstract}

\maketitle

\section{Introduction}

Looking through any book devoted to  Fourier analysis or just the table of contents, one will see that
the $L^1$ theory of the Fourier transform or the Hilbert transform goes with the corresponding $L^p$ theory.
This is not the case for the theories of the corresponding transforms for measures, see, e.g., \cite{BN}.
A simple curiosity may force one to wonder where the analogs for measures are hidden. We have not succeeded to find
such a machinery in the literature. However, we have a more concrete reason to be interested in the depository of such
treasures. Let us consider the following example, somewhat sketchy. The cosine Fourier transform of a function of bounded
variation on the half-axis,    to wit $f \in BV(\R_+)$, is
\begin{eqnarray}\label{fc} \widehat{f_c}(x)=\int_0^\infty f(t)\cos (2\pi xt)\,dt. \end{eqnarray}
Let $f$ be locally absolutely continuous on $(0, \infty)$; note that here we use not $\mathbb R_+=[0,\infty)$ but $(0, \infty)$
since it is of considerable importance and generality that we can avoid claiming absolute continuity at the origin. Let in addition, 
$\lim\limits_{t\to\infty}f(t)=0$ and ${\mathcal{H}^o}f'\in L^1(\mathbb R_+).$ Here, for any integrable function $g$ on $\mathbb R_+$,

\begin{eqnarray}\label{dht0}\mathcal{H}^og(x)=\frac{2}{\pi}\int_0^\infty\frac{tg(t)}{x^2-t^2}\,dt\end{eqnarray}
is the Hilbert transform applied to the odd extension of $g$; of course, understood in the principle value sense.
When it is integrable, we will denote the corresponding Hardy space of such functions $g$ by $H_0^1(\mathbb R_+)$.
Then the cosine Fourier transform of $f$ in (\ref{fc}) is Lebesgue integrable on $\mathbb R_+$, with
	
\begin{eqnarray}\label{fcehar} \|\widehat{f_c}\|_{L^1(\mathbb R_+)}\lesssim
		\|f'\|_{L^1(\mathbb R_+)}+\|{\mathcal{H}^o}f'\|_{L^1(\mathbb R_+)}=\|f'\|_{H_0^1(\mathbb R_+)}.\end{eqnarray}
For this result as well as many other more advanced ones, see \cite{L0} (cf. \cite{Fr} and \cite{GM1}; see also \cite[Ch.3]{IL} or more 
recent \cite{L2019}). Recall that the derivative of a function of bounded variation exists almost everywhere and is Lebesgue integrable.
Here and in what follows $\varphi\lesssim \psi$ means that $\varphi\le C\psi$ with $C$ being an absolute constant.

A natural question arises {\it whether we can relax the assumption of absolute continuity}. The first step in an eventual proof
is obvious: we integrate by parts in the Stieltjes sense in (\ref{fc}) and arrive at
$$\widehat{f_s}(x)=-\frac1{2\pi x}\int_0^\infty \sin(2\pi xt)\,df(t).$$
However, if we try to follow the lines of the proof of (\ref{fcehar}) and arrive at a version of Hardy's space with integrable Hilbert 
transform of $df$, we will fail. The point is that the Hilbert transform of $df$ does exist almost everywhere (see, e.g., \cite[\S 8.1 ]{BN}) 
but its integrability leads to absolute continuity, the property that we aimed to remove (see, e.g., \cite{CMR} and references therein).

On the other hand, there is a scale of handy subspaces of $H_0^1(\mathbb R_+)$, for which the integrability of the cosine Fourier transform
is valid, with the norm of $f'$ in one of such spaces on the right-hand side of (\ref{fcehar}). More precisely, for $1<p<\infty,$ set

\begin{eqnarray*}\|g\|_{O_p}=\int_0^\infty\left(\frac{1}{x}\int_{x\le t\le 2x}|g(t)|^pdt \right)^{\frac1p}\,dx. \end{eqnarray*}
Further, for $p=\infty$, let

\begin{eqnarray*}\|g\|_{O_\infty}=\int_0^\infty \operatornamewithlimits{ess\, sup}\limits_{x\le t\le 2x}|g(t)|\,dx.\end{eqnarray*}
Known are (see, e.g., the above sources) the following relations:

\begin{eqnarray}\label{embs1} O_\infty\hookrightarrow O_{p_1} \hookrightarrow O_{p_2} \hookrightarrow H_0^1 \hookrightarrow  L^1\quad (p_1>p_2>1).\end{eqnarray}
Under the above assumptions, there holds

\begin{eqnarray}\label{fceop} \|\widehat{f_c}\|_{L^1(\mathbb R_+)}\lesssim \|f'\|_{O_p(\mathbb R_+)},\end{eqnarray}
provided that the right-hand side is finite for some $p>1$. In fact, a different notation is convenient  for the case where  the $O_p$ norm is 
calculated for the derivative: $\|f\|_{V_p}:=\|f'\|_{O_p}.$ Just this notation is appropriate for further generalization. On the one hand, (\ref{fceop}) 
follows from (\ref{fcehar}) and (\ref{embs1}). On the other hand, a  direct proof for (\ref{fceop}) is given in  \cite{GM1}, where the main ingredient 
is the Hausdorff-Young inequality. To provide similar reasoning for measures $\mu_f$ generated by functions of bounded variation $f$ rather than functions 
(however, we shall write $df$ rather than $d\mu_f$), we need a corresponding extension of the Hausdorff-Young inequality.
And here is the point where our special harmonic analysis comes into play. We do not restrict ourselves to finding immediate tools for the above 
problem but try to establish a kind of general and multivariate theory. A variety of relevant issues will be introduced and studied.


\medskip

\subsection{Basic notions}

\quad

We define an analog of $L^p$ spaces for measures by means of an associated norm. For a given $p\in [1, \infty]$,  we use  the notation $\| \cdot \|_p$ 
to denote the  standard  norm in  $L^p(\R^n)=L^p(\R^n, dx)$,  where by $dx$ we mean the Lebesgue measure.

  We  denote by $ {\cal S}(\R^n)$ the  Schwartz space of rapidly decreasing $C^\infty$ functions,   and either by  ${\cal F}(f)  $ or by
  $\widehat f $ the Fourier transform of a function $f\in {\cal S}(\R^n)$, written $$\widehat f(y)=\int_{\R^n} f(x) e^{-2\pi i x\cdot y}\,dx,$$
where $x\cdot y=x_1y_1+...+x_ny_n$. Recall that $\F :{\cal S}(\R^n)\to {\cal S}(\R^n)$ is  one-to-one, and the inverse Fourier transform is   $\check{f}(y)=   
\widehat f(-y)$. In this paper, we will not distinguish between  Fourier transform and inverse Fourier transform, unless it becomes  necessary.

 For $p\in [1, 2]$, the operator $\F : L^p(\R^n)\to L^{p'}(\R^n)$, with $\frac1p+\frac1{p'}=1$, is bounded, with $\|\widehat f\|_{p'}\leq \|f\|_p$ and equality if $p=2$.
For $  L^p(\R^n)$, $p>2$, the Fourier transform can be defined in the distributional sense as
   $$\langle \widehat f, \psi\rangle= \int_{\R^n} f(x) \widehat \psi(x)\, dx, \quad \psi\in {\cal S}(\R^n);$$
clearly, $\widehat f$  is a function if and only if $f=\widehat g$ for some $g\in L^{p'}(\R^n)$. With this observation in mind, we give the following definition.

\medskip

For a given $p\in [1,\, \infty]$, we let 
\begin{equation}\label{hatp} \widehat L^p (\R^n) = \{f\in L^p(\R^n) \, :  \mbox{ $f=\widehat g$ for some } g\in L^{p'}(\R^n) \}.\end{equation}
\noindent In a natural way, we endow $\widehat{L}^p (\R^n)$ with the norm
\begin{equation}\label{normhat}	\|f\|_{\widehat{L}^p } =\|\widehat f \,\|_{p'}. \end{equation}

\medskip

 With this definition, the Fourier transform
 $$\F: L^p(\R^n)\to (\L^{p'}(\R^n), \ \|\cdot \|_{\L^{p'}})$$
 is  a one-to-one  isometry. When $p\in [1, 2]$,  the Hausdorff-Young inequality yields $\|f\|_{\widehat{L}^p }\leq \|f\|_{p}$, with equality if $p=2$.

We denote by $\M $ the space of sigma-finite   Borel measures  on $\R^n$. 
For every $p\in [1, \infty]$, we define the functional $\| \cdot \| _p^* : \M\to[0, \infty]$ as
\begin{equation} \label{p*norm} \| \mu\|_p^*=\sup\limits_{h\in \widehat{L}^{p'} (\R^n) \, :\atop \|  h\|_{\widehat{L}^{p'}}\le1}\left|\int_{\mathbb R^n} h(t)d\mu(t)\right|;
\end{equation}
we let
\begin{equation} \label{p*norm2} \M_p=  \{  \mu \in \M \ :  \| \mu\|_p^* <\infty \}.   \end{equation}
Note that, for every $\mu\in \M_p$ and every $h\in \L^{p'}(\R^n)$, we have that
\begin{equation}\label{basic}
	\left|\int_{\R^n} h(x) d\mu(x) \right| \leq \| h\|_{\L^{p'}}\| \mu\|_p^*.
\end{equation}

We do not assume that  our measures  are positive, or even real-valued. For definition and properties of non-positive measure see e.g. \cite{H}. With this assumption, the spaces $\M_p$ are vector spaces, and we will prove in Section 2 that  the functional $\| \mu\|_p^*$ is a norm on $\M_p$.

\medskip

\subsection{Structure of the paper}

\quad

With $\| \mu\|_{p}^*$ and $\M_p$ denoted by similarity to $L^p$, we then establish basic properties of these measure spaces.

We will prove in Section 2 that the spaces
$\M_p$ have many properties in common with $L^p$ spaces. We establish the properties of   measures in $\M^p$ spaces   and the properties of functions in  spaces $\L^p(\R^n)$.  Discussing then the Fourier transform of a measure, we establish a Hausdorff-Young type inequality. Further, for the convolution 
of a function and a measure, we prove a Young type inequality for our setting. We mention that the results in Section 2 are supplemented with examples.

\medskip

Section 3 is devoted to applications of the introduced machinery.
 One of them is a development of an  uncertainty principle for measures.  The uncertainty principle in Fourier analysis
 quantifies the intuition that a function and its Fourier transform cannot both be concentrated on small sets. Many examples of this principle can
 be found, e.g., in the book by Havin and J\"oricke \cite{HJ}  and in an article by Folland and Sitaram \cite{FS}.
 In Subsection 3.1, using a quantitative version of a result in  \cite{AB}, we prove  that a  finite measure
 and its Fourier transform cannot both  be supported on sets of finite  Lebesgue measure.
 Recall that a  measure $\mu$ is supported in a set $E\subset \R^n$ if $\mu(F)=0$ whenever $F$ is a measurable set that does not intersect $E$.  
 

\medskip
In conclusion, we formulate and prove an analog of (\ref{fceop}) for functions of bounded variation {\it without assuming absolute continuity}.
This is Theorem \ref{main}. In order to formulate and prove it, as an analog of $V_p$ spaces for functions, we introduce the notion $f\in V_p^*$
for measures, with
$$\|f\|_{V_p^*}=\int_0^\infty \, x^{-\frac1p}\,\|\chi_{(x, 2x)}\mu_f\|_{p }^*\,dx $$
where $\chi_E$ denotes the characteristic function of $E$.
The  product of a measure $\mu$ and a  measurable function $f$  is  the measure defined by $(f \mu)(F)=\int_F f\,d\mu$ for every   
measurable set $F$. 
For $1<p\le2$, our new Hausdorff-Young inequality will be helpful, while for $p>2$, we prove an analog of (\ref{embs1}) and use an embedding argument.

\bigskip

\section{$L^p$ properties of measures}

In this section we establish basic properties of  measures in the spaces $\M_p$ defined in the introduction, with $p\in [1,\infty]$, that mimic those of functions in $L^p$ spaces.  
We   also establish properties of the spaces  $\L^p(\R^n)$ defined in \eqref{hatp}.
 
 %
If $E$ is a measurable subset of $\R^n$,  with $|E|\ne 0$,  we let
\begin{equation} \label{Enorm}\| \mu\|_{p, E}^*= \|\chi_E \mu\|_p^*=\sup\limits_{h\in \widehat{L}^{p'} (\R^n) \, :\atop \|  h\|_{\widehat{L}^{p'}}\le1}\left|\int_{E}  h(t)\,d\mu (t)\right|, \end{equation}
and    $\M_{p, E}=  \{  \mu :  \| \mu\|_{p,E}^* <\infty\} $.

%
We can also define
\begin{equation}\label{1loc}
\M_{1, loc}=  \{  \mu :  \| \mu\|_{1,E}^* <\infty \ \mbox {for every measurable bounded set $E$}\}.
\end{equation}

  The standard Lebesgue measure  and the Delta measures are   notable examples  of $\M_p$ measures.   In the rest of this paper we 
will use  ${\mathcal L}$  (or $dx$  in integration) to denote the standard  Lebesgue measure.  
 
  For a given $a\in\R^n$, we let $ \delta_a$ be the  measure defined as   $ \int_{\R^n }f(x)\,d\delta_a =f(a).$  
  
  \begin{example} \label{Leb} We show that the standard Lebesgue measure  is in     $\M_\infty$ and $\|{\cal L}\|_\infty^*= 1.$

  	Indeed, 
  	$$\|{\cal L}\|_{\infty}^*=  \sup\limits_{h\in \widehat{L}^{1} (\R^n) \, :\atop \|  h\|_{\widehat{L}^{1}=\|\widehat h\|_\infty}\le1}\left|\int_{\R^n}  h(t)\,dt\right| \leq
  	\sup\limits_{h\in {L}^{1} (\R^n) \, :\atop \|  h\|_{1}\le1}\left|\int_{\R^n}  h(t)\,dt\right| \leq 1. $$
  	To prove that equality holds, we can consider $  g=e^{-\pi  |x|^2}$.  It is easy to verify that $\widehat g(x)=g(x)$, and so
  	$g\in  \L^1(\R^n)$ and $  \|g\|_{\widehat{L}^{1}}=\|\widehat g\|_\infty=1$. Since    $1=\widehat g(0)=\int_{\R^n}  g(t)\,dt=\|g\|_1$,  we have that
  	$$
  	\|{\cal L}\|_{\infty}^*\ge \int_{\R^n}  g(t)\,dt=1,
  	$$
  	as desired.   
  	
  \end{example}

  \begin{example}\label{ex1}             We show that  $\delta_a\in \M_p$ only for $p=1$ and $\|\delta_a\|_1=1$.  Indeed, assuming $a=0$ for simplicity, we can easily see  that
  	\begin{align*}  \|\delta_0\|_1^* &=\sup\limits_{h\in {\cal S}(\R^n ):\atop{ \|  h \|_{  \L^\infty}\le1}}\left|\int_{\mathbb R^n}h(t)\, d\delta_0\right|=
  		\sup\limits_{h\in {\cal S}(\R^n ):\atop{ \|\, \widehat  h\, \|_{ 1}\le1}} |h(0)|\\
  		&	= \sup\limits_{h\in {\cal S}(\R^n ):\atop{ \| \widehat  h \|_{ 1}\le1}}\left|\int_{\R^n }\widehat h(x)  \,dx\right|
  		\leq \sup\limits_{h\in {\cal S}(\R^n ):\atop{ \| \,\widehat  h\, \|_{ 1}\le1}}\|\,\widehat h\,\|_1= 1.\end{align*}
  	
  	To prove that equality holds, we can consider the function $g= e^{-\pi |x|^2}$ in the previous example and  verify that
  	$ 
  	\|\delta_0\|_1^*\ge \|\hat g\|_1=1.
  	$
  	An easy variation of this argument shows that 	$\delta_0\not \in \M_p$ if $p>1$. \end{example}

\subsection{H\"older type inequalities}

\quad

We prove the following

\begin{theorem}  \label{Holder}
	{\it	 If $\mu\in \M_p $ and $f\in \L^q (\R^n)$,   and $\frac 1r=\frac 1q+\frac 1p$, then}   
\begin{equation}\label{hold1}
	\|f  \mu\|^*_{r}\leq \|f\|_{\L^q }\| \mu\|^*_{p}.
\end{equation}
\end{theorem}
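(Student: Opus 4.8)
The plan is to verify the Hölder-type inequality directly from the definition of the $\|\cdot\|_r^*$ norm in \eqref{p*norm}, dualizing against test functions in $\L^{r'}(\R^n)$. So fix $h\in\L^{r'}(\R^n)$ with $\|h\|_{\L^{r'}}\le1$; we must bound $\left|\int_{\R^n} h(x)\,d(f\mu)(x)\right|=\left|\int_{\R^n} h(x)f(x)\,d\mu(x)\right|$. The natural idea is to apply the defining inequality \eqref{basic} for $\mu\in\M_p$ with the test function $hf$ in place of $h$, which would give the bound $\|hf\|_{\L^{p'}}\,\|\mu\|_p^*$. It then remains to show $\|hf\|_{\L^{p'}}\le\|h\|_{\L^{r'}}\|f\|_{\L^q}$, i.e. that multiplication behaves well on the $\L$-spaces with the Hölder exponent relation; indeed from $\frac1r=\frac1q+\frac1p$ one gets $\frac1{r'}=1-\frac1r=\left(1-\frac1q\right)+\left(1-\frac1p\right)-1=\frac1{q'}+\frac1{p'}-1$, which is exactly the Young-convolution relation $\frac1{p'}=\frac1{q'}+\frac1{r'}-1$ dual to the stated one. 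This suggests the following route: pass to the Fourier side, where $\widehat{hf}=\widehat h * \widehat f$, and apply the classical Young inequality $\|\widehat h * \widehat f\|_{p'}\le\|\widehat h\|_{r'}\|\widehat f\|_{q'}$, valid precisely because $\frac1{p'}+1=\frac1{r'}+\frac1{q'}$.

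Carrying this out: first I would record the elementary exponent computation above, noting that all of $p,q,r$ lie in $[1,\infty]$ and that $r\le\min(p,q)$ so $r'\ge\max(p',q')$, and that the Young relation among the conjugate exponents holds with the right values in $[1,\infty]$. Second, I would argue that $hf\in\L^{p'}(\R^n)$: since $h=\widehat u$ for some $u\in L^{r}$ and $f=\widehat v$ for some $v\in L^{q}$ (unwinding definition \eqref{hatp}, with $(r')'=r$ and $(q')'=q$), we have $hf=\widehat u\,\widehat v=\widehat{u*v}$, and by Young's inequality $u*v\in L^{p'}$ with $\|u*v\|_{p'}\le\|u\|_{r}\|v\|_{q}=\|h\|_{\L^{r'}}\|f\|_{\L^{q}}$; hence $hf\in\L^{p'}$ with the same norm bound. (One must be slightly careful that $\L^{p'}\subset L^{p'}$ is where $hf$ actually lives and that the norm $\|hf\|_{\L^{p'}}=\|\widehat{hf}\|_{p''}=\|u*v\|_{\,(p')'}$; since $(p')'$ is the conjugate of $p'$, and $u*v$ was produced in exactly that space, this matches.) Third, I would apply \eqref{basic} to $\mu\in\M_p$ with test function $hf$ to get $\left|\int h f\,d\mu\right|\le\|hf\|_{\L^{p'}}\|\mu\|_p^*\le\|h\|_{\L^{r'}}\|f\|_{\L^q}\|\mu\|_p^*$. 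Finally, taking the supremum over all such $h$ yields \eqref{hold1}.

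The main obstacle I anticipate is purely bookkeeping with the conjugate exponents and the precise meaning of the $\L$-norms: one has to be sure that the "Hölder relation" $\frac1r=\frac1q+\frac1p$ on the original exponents translates, on the Fourier/dual side, to exactly the hypothesis of the classical Young inequality, and that the endpoint cases ($p=1$, or $q$ or $r$ equal to $1$ or $\infty$) are not excluded — e.g. $r=1$ forces $p=q=1$ (if finite) which is consistent, and $\L^1$-norm is the sup norm of the transform. A secondary technical point is justifying $\widehat u\,\widehat v=\widehat{u*v}$ as an $L^{p'}$-function identity when $u,v$ are merely in $L^r,L^q$ rather than Schwartz; this is standard (approximate by Schwartz functions, or invoke that Young's inequality together with density makes $u*v$ well-defined in $L^{p'}$ and its transform equals the product distributionally), so I would dispatch it with a one-line remark rather than a full argument. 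No genuinely hard analysis is involved; the content is entirely in setting up the dualization and invoking Young's inequality on the transform side.
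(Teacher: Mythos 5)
Your overall strategy is exactly the paper's: dualize against $h\in\L^{r'}(\R^n)$ with $\|h\|_{\L^{r'}}\le 1$, show that $hf$ is an admissible test function for $\|\mu\|_p^*$ by passing to the Fourier side, where the product becomes a convolution, apply Young's inequality, and then invoke \eqref{basic} and take the supremum. That is the right idea and is precisely how the paper argues. The problem is that the conjugate-exponent bookkeeping --- which you correctly identify as the entire content of the proof --- is wrong as written, in three related places. First, $f\in\L^q(\R^n)$ means $f=\widehat v$ with $v\in L^{q'}$ (definition \eqref{hatp} puts the preimage in the \emph{conjugate} space), not $v\in L^q$; your ``$(q')'=q$'' is applied to the wrong exponent. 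Second, $\|hf\|_{\L^{p'}}=\|\widehat{hf}\|_{(p')'}=\|\widehat{hf}\|_{p}=\|u*v\|_{p}$, not $\|u*v\|_{p'}$; your parenthetical ``$u*v$ was produced in exactly that space'' conflates $p$ with $p'$ and is consistent only when $p=2$. Third, the relation you claim, $\frac1{p'}=\frac1{q'}+\frac1{r'}-1$, does not follow from the correct identity $\frac1{r'}=\frac1{q'}+\frac1{p'}-1$ (the two agree only when $p=r$, i.e.\ $q=\infty$), and the inequality you then invoke, $\|u*v\|_{p'}\le\|u\|_{r}\|v\|_{q}$, requires $1+\frac1{p'}=\frac1r+\frac1q$, which under the hypothesis $\frac1r=\frac1p+\frac1q$ forces $r=1$. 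So the central step, as stated, is false for general admissible $p,q,r$.

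The fix is short and turns your argument into the paper's: take $u\in L^{r}$ with $\|u\|_{r}=\|h\|_{\L^{r'}}$ and $v\in L^{q'}$ with $\|v\|_{q'}=\|f\|_{\L^q}$; since $\frac1r+\frac1{q'}=\frac1p+\frac1q+1-\frac1q=1+\frac1p$, Young's inequality gives $\|u*v\|_{p}\le\|u\|_{r}\|v\|_{q'}$, i.e.\ $\|hf\|_{\L^{p'}}\le\|h\|_{\L^{r'}}\|f\|_{\L^q}$, after which your application of \eqref{basic} and the supremum over $h$ go through verbatim. Your secondary technical remark --- that $\widehat u\,\widehat v=\widehat{u*v}$ needs justification for $u,v$ merely in Lebesgue spaces --- is a fair point that the paper also passes over silently; a density argument handles it. But the gap above is not cosmetic: the inequality you wrote down is the one thing the proof has to get right, and it does not hold with your exponents.
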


\begin{proof}
Assume $\|f\|_{\L^q} =1$, or else replace $f$ with $\tilde f= \frac{f}{\|f\|_{\widehat{L}^q}}$.
With the notation previously introduced,
$$
\|  f  \mu\|^*_{r}= \sup_{h\in \L^{r'}(\R^n) :\atop \|h\|_{\widehat{L}^{r'}  }\leq 1}  \left|\int_{\R^n} h(y)f(y)\,d\mu(y)\right|.
$$
Let us show that $hf\in \widehat{L}^{p'} $ and $\|hf\|_{\L^{p'}}\leq 1$. Indeed, $\widehat{hf}=\widehat h*\widehat f$ (standard convolution). 
Since $\frac 1{r}+\frac{1}{q'}=  1+\frac 1{p }$, by Young's inequality  for convolution and  the Hausdorff-Young inequality,
$$
\|hf\|_{\L^{p' }}=\|\widehat{hf}\|_{p }= \|\widehat h*\widehat f\|_p \leq \|\widehat h \|_{r} \| \widehat f\|_{q'}= \|  h\|_{\L^{r' } } \|   f\|_{\L^q  }\leq 1.$$
Thus, $\|hf\|_{\widehat{L}^{p' }}\leq 1$, and so
$$
\|f  \mu\|^*_{r}  \leq
\sup_{k\in \L^{p'}(\R^n) :\atop \|k\|_{\widehat L^{p'}}  \leq 1} \left|\int_{\R^n} k(y)\, d\mu(y)\right|= \| \mu\|_p^*=
\| \mu\|_p^*\,\|f\|_{\L^q},  $$
as required.                          \end{proof}

\begin{remark}
	When  $r=1$, for every $\mu\in \M_p $ and $f\in \L^{p'} (\R^n)$ we have that
	$$
	\|f  \mu\|^*_{1}\leq \|f\|_{\L^{p'} }\| \mu\|^*_{p}.
	$$
	This is the case  of \eqref{hold1} that most closely resembles the standard H\"older's inequality. 
	
	\end{remark}

\begin{cor}\label{incl1}
{\it	Let $E $ be a bounded subset of $\R^n$. Then  $\M_{r, E}\subset \M_{p, E}$ whenever} $1\leq p\leq r\leq \infty $.	\end{cor}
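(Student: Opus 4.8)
The plan is to deduce Corollary \ref{incl1} from the H\"older-type inequality of Theorem \ref{Holder} by choosing $f$ to be a suitable characteristic-type function supported on $E$. The key observation is that on a bounded set $E$ one can control the $\M_{p,E}^*$ norm by the $\M_{r,E}^*$ norm for $p\le r$, because a characteristic function of a bounded set, or a smooth bump equal to $1$ on $E$, sits in all the relevant $\widehat L^q$ spaces with a finite norm depending only on $E$.

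First I would fix $1\le p\le r\le\infty$ and set $q$ by $\frac1q=\frac1p-\frac1r$, so that $\frac1p=\frac1q+\frac1r$; note $q\in[1,\infty]$ since $p\le r$. Given $\mu\in\M_{r,E}$, I want to show $\|\mu\|_{p,E}^*=\|\chi_E\mu\|_p^*<\infty$. The idea is to write $\chi_E\mu=\chi_E\cdot(\chi_E\mu)$ and apply Theorem \ref{Holder} with the measure $\chi_E\mu\in\M_r$ (which lies in $\M_r$ precisely because $\mu\in\M_{r,E}$) and the function $f=\chi_E$. This gives
\begin{equation*}
\|\chi_E\mu\|_p^*=\|\chi_E\,(\chi_E\mu)\|_p^*\le \|\chi_E\|_{\widehat L^q}\,\|\chi_E\mu\|_r^*=\|\chi_E\|_{\widehat L^q}\,\|\mu\|_{r,E}^*,
\end{equation*}
where I have used that the roles of $p$ and $r$ in Theorem \ref{Holder} correspond here to $r$ (the measure's space) and $p$ (the output space) with the intermediate exponent $q$, and that indeed $\frac1p=\frac1q+\frac1r$.

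The one genuine point to verify — and the main obstacle — is that $\|\chi_E\|_{\widehat L^q}<\infty$, i.e.\ that $\chi_E\in\widehat L^q(\R^n)$, which by definition \eqref{hatp}–\eqref{normhat} requires $\chi_E=\widehat g$ for some $g\in L^{q'}(\R^n)$ and then $\|\chi_E\|_{\widehat L^q}=\|\widehat{\chi_E}\|_{q'}$. For $q=\infty$ (the case $p=r$) the statement is trivial, so assume $q<\infty$, whence $q'>1$. Since $E$ is bounded, $\chi_E\in L^1\cap L^2$, so $\widehat{\chi_E}\in L^2\cap L^\infty$; moreover $\chi_E$ itself is bounded and compactly supported, hence in $L^{q'}$ for all $q'$, so $\chi_E=\widehat{g}$ with $g=(\chi_E)^{\vee}\in L^2$, and one must check $g\in L^{q'}$. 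This is where a mild regularity issue enters: $\widehat{\chi_E}$ need not be integrable for a general bounded $E$. To avoid this, rather than $\chi_E$ itself I would use a function $\varphi\in\mathcal S(\R^n)$ with $\varphi\equiv1$ on $E$ (possible since $E$ is bounded), so $\chi_E\mu=\varphi\cdot(\chi_E\mu)$; then $\varphi\in\widehat L^q$ for every $q$ with $\|\varphi\|_{\widehat L^q}=\|\widehat\varphi\|_{q'}<\infty$ because $\widehat\varphi\in\mathcal S$. Applying Theorem \ref{Holder} with $f=\varphi$ yields $\|\chi_E\mu\|_p^*\le\|\widehat\varphi\|_{q'}\,\|\mu\|_{r,E}^*<\infty$, which proves $\M_{r,E}\subset\M_{p,E}$. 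Finally I would remark that the inclusion is continuous, with $\|\mu\|_{p,E}^*\le C_E\,\|\mu\|_{r,E}^*$ where $C_E=\|\widehat\varphi\|_{q'}$ depends only on $E$ and on the chosen bump $\varphi$.
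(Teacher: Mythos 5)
Your proof is correct and follows essentially the same route as the paper: both apply the H\"older-type inequality of Theorem \ref{Holder} to the measure $\chi_E\mu\in\M_r$ with a cutoff function equal to $1$ on $E$ lying in $\widehat L^q$, $\frac1q=\frac1p-\frac1r$. The only difference is the choice of cutoff — the paper takes $\chi_{Q_R}$ for a cube $Q_R\supset E$, whose Fourier transform is an explicit sinc product in $L^s$ for every $s>1$ (yielding the explicit constant $C_q^nR^{n/q'}$), whereas you use a Schwartz bump after correctly observing that $\chi_E$ itself need not lie in $\widehat L^q$ for a general bounded $E$.
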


\begin{proof}
Assume  $p<r$, since the case $p=r$ is trivial.  Assume also   $E\subset  Q_R=[-R,\, R]^n$ for some $R>0$. By \eqref{Enorm},
$$
\|     \mu\|^*_{r,E}= \sup_{\|h\|_{\L^{r'  }}\leq 1}  \left|\int_{E} h(y)\, d\mu(y)\right| =\sup_{\|\widehat h\|_{ L^{r }}\leq 1}
\left|\int_{\R^n} \chi_Q (y)h(y) \chi_E(y)\,d\mu(y)\right|.
$$
Let $q=\frac{rp}{r-p}$. Since  $r\ne p$, we have $q<\infty$ and  $q'>1$.
The Fourier transform of  the characteristic function of $Q_R$ is $$\widehat  \chi_{Q_R}(x)=\prod\limits_{j=1}^n\frac{\sin(\pi R x_j)}{\pi x_j},$$
and so $\widehat \chi_{Q_R}(x)\in  L^s(\R^n)$ for every $s>1$.
We have  $\|\widehat \chi_{Q_R} \|_s= C_s^nR^{\frac{n}{s'}}$, where
$$C_s= \left\|\frac{\sin(\pi \cdot)}{\pi \cdot}\right\|_s =\begin{cases} \Big(\frac{2s'}\pi\Big)^{\frac{1}s}, & 1<s<2,\\ \Big(\frac2s\Big)^{\frac{1}{2s}}, & 2\leq s<\infty \\1, &s=\infty,  \end{cases}$$
is independent of $R$. In fact, $C_s$ can be taken $\Big(\frac{2s'}\pi\Big)^{\frac{1}s}$ for all $s<\infty$. This is calculated by minimal means: split
the integral

\begin{equation}\label{ballint} \int_{\mathbb R} \Big|\frac{\sin(\pi t)}{\pi t}\Big|^s\,dt=\frac1\pi \int_{\mathbb R} \Big|\frac{\sin(t)}{t}\Big|^s\,dt \end{equation}
into two, over $|t|\le1$ and over $|t|>1$, and replace $\Big|\frac{\sin(t)}{t}\Big|$ in the first by $1$ and in the second by $\frac1{|t|}$.
However, it is known (see \cite[Lemma 3]{Ball} or \cite[Ch.VI, 7.5]{MaPo}) that for $s\ge2$, the sharp bound for (\ref{ballint}) is $\sqrt{\frac2s}$.	

Applying  Proposition \ref{Holder}  with  $f= \chi_{Q_R}$ and  $\chi_E \mu$ in place of $ \mu$,   we obtain

\begin{equation}\label{ineqE} \|  \mu\|^*_{p, E}\leq \|\chi_{Q_R}\|_{\L^q } \|     \mu\|^*_{r,E} = C_q^n R^{\frac{n}{q'}}\|     \mu\|^*_{r,E},\end{equation}
and so  $\|   \mu\|^*_{p, E} <\infty $ whenever $\|   \mu\|^*_{r, E} <\infty$, as required.	\end{proof}

\medskip
\noindent
\begin{cor}\label{inclusion}
 \it{	For every $p\in [1, \infty]$, we have that 	$\M_p\subset \M_{1, loc}$.  }
\end{cor}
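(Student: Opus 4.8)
The plan is to deduce $\M_p \subset \M_{1,loc}$ directly from Corollary \ref{incl1}. Let $\mu \in \M_p$ for some $p \in [1,\infty]$, and let $E$ be an arbitrary bounded measurable subset of $\R^n$; I must show $\|\mu\|_{1,E}^* < \infty$. Since $\mu \in \M_p$ means $\|\mu\|_p^* < \infty$, and trivially $\|\mu\|_{p,E}^* \le \|\mu\|_p^*$ (the supremum in \eqref{Enorm} is over integrals against $h\chi_E$, a sub-collection controlled by the full integrals — more precisely, for $\|h\|_{\widehat L^{p'}}\le 1$ one has $\bigl|\int_E h\,d\mu\bigr| = \bigl|\int_{\R^n} \chi_E h\,d\mu\bigr| \le \|\mu\|_p^*$ provided $\chi_E h$ is still admissible, which is the only delicate point — see below), we get $\mu \in \M_{p,E}$.

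**Key steps.** First I would reduce the claim to showing $\|\mu\|_{1,E}^* < \infty$ for each fixed bounded measurable $E$, which is exactly the definition \eqref{1loc} of $\M_{1,loc}$. Second, I would apply Corollary \ref{incl1} with $r = p$ and the role of ``$p$'' there played by $1$: since $1 \le p \le \infty$, that corollary gives $\M_{p,E} \subset \M_{1,E}$. Third, it remains to check $\mu \in \M_{p,E}$, i.e.\ $\|\chi_E\mu\|_p^* < \infty$; this should follow from $\|\mu\|_p^* < \infty$. The cleanest route is to invoke Theorem \ref{Holder} (the H\"older-type inequality) with $q=\infty$, $f = \chi_E$: since $\chi_E \in L^\infty$ and, for $E$ bounded, $\widehat{\chi_E} \in L^1(\R^n)$ (being a compactly supported... no — rather, one observes $\chi_E$ need not lie in $\widehat L^\infty$ in general, so instead enlarge $E$ to a cube $Q_R \supset E$ and use $f = \chi_{Q_R}$, whose Fourier transform is the product of sinc functions and hence in $L^1$ for... actually $\widehat{\chi_{Q_R}} \notin L^1$). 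The genuinely safe choice, already executed inside the proof of Corollary \ref{incl1}, is to take $q$ finite with $q > 1$: then $\widehat{\chi_{Q_R}} \in L^{q'}(\R^n)$, so $\chi_{Q_R} \in \widehat L^q(\R^n)$, and Theorem \ref{Holder} with $\tfrac1r = \tfrac1q + \tfrac1p$ gives $\|\chi_{Q_R}\mu\|_r^* \le \|\chi_{Q_R}\|_{\widehat L^q}\,\|\mu\|_p^* < \infty$. Taking $q$ large makes $r$ as close to $p$ as we like; in particular choosing $q$ so that $r \le p$ (possible since $r < p$ automatically when $q<\infty$) and then applying Corollary \ref{incl1} once more to pass from $\M_{r,E}$ down to $\M_{1,E}$ completes the argument. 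In fact the estimate \eqref{ineqE} already established in the proof of Corollary \ref{incl1}, namely $\|\mu\|_{p,E}^* \le C_q^n R^{n/q'}\|\mu\|_{r,E}^*$ rearranged appropriately, packages all of this.

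**Main obstacle.** The only subtlety is the one flagged above: the passage from the global bound $\|\mu\|_p^* < \infty$ to the localized bound $\|\chi_E\mu\|_p^*<\infty$ is \emph{not} immediate, because multiplying an admissible test function $h \in \widehat L^{p'}$ by $\chi_E$ destroys membership in $\widehat L^{p'}$ (a cutoff is not a Fourier multiplier on these spaces). This is precisely why the truncation-by-a-cube-and-Young's-inequality device of Corollary \ref{incl1} is needed rather than a one-line inequality. So the real content of this corollary is: $\M_p \subset \M_{p,Q_R}$ for every $R$ (via Theorem \ref{Holder} applied to $\chi_{Q_R}$ with a finite conjugate exponent), followed by $\M_{p,Q_R} \subset \M_{1,Q_R}$ (via Corollary \ref{incl1}), and then monotonicity $\M_{1,Q_R} \subset \M_{1,E}$... — rather, $\|\mu\|_{1,E}^* \le \|\mu\|_{1,Q_R}^*$ when $E \subset Q_R$, which is the one genuinely trivial monotonicity (restricting the support region only shrinks the class of test integrals). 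Assembling these three inclusions for every bounded $E$ yields $\mu \in \M_{1,loc}$, and everything reduces to results already proved in the excerpt.
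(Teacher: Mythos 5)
Your reduction and your diagnosis of where the difficulty lies are both essentially correct: the paper's own proof is the one-liner ``follows from Corollary \ref{incl1} and \eqref{1loc}'', and you are right that the only non-trivial content is the passage from the global bound $\|\mu\|_p^*<\infty$ to a localized bound, since multiplying an admissible test function $h=\widehat g\in\L^{p'}$ by a cutoff does not keep it in the unit ball of $\L^{p'}$. Your route via Theorem \ref{Holder} with $f=\chi_{Q_R}$ and a finite exponent $q$ (you need $q\ge p'$ so that the resulting $r$ satisfies $r\ge1$, a constraint you do not mention) correctly yields $\|\chi_{Q_R}\mu\|_r^*\le\|\chi_{Q_R}\|_{\L^q}\,\|\mu\|_p^*<\infty$, hence $\mu\in\M_{r,Q_R}$ and, by Corollary \ref{incl1}, $\mu\in\M_{1,Q_R}$.

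However, the last link of your chain --- that $\|\mu\|_{1,E}^*\le\|\mu\|_{1,Q_R}^*$ for $E\subset Q_R$ is ``the one genuinely trivial monotonicity'' --- is not trivial; it is the very same localization problem you flagged two sentences earlier. For a fixed admissible $h$ one has $\int_E h\,d\mu=\int_{Q_R}(\chi_E h)\,d\mu$, and since $\mu$ is signed (or complex) there is no pointwise comparison between $\bigl|\int_E h\,d\mu\bigr|$ and $\bigl|\int_{Q_R}h\,d\mu\bigr|$; to dominate the former by $\|\mu\|_{1,Q_R}^*$ you would need $\chi_E h$ to be admissible for the $Q_R$-norm, i.e.\ $\chi_E h\in\L^\infty$ with norm at most $1$, which is exactly what fails. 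The repair is to bypass $Q_R$ altogether and run your second step with $f=\chi_E$ itself: Corollary \ref{Propset}~a) gives $\|\chi_E\|_{\L^{p'}}\le|E|^{1/p'}<\infty$ for bounded $E$, and Theorem \ref{Holder} with $q=p'$, $r=1$ then yields $\|\mu\|_{1,E}^*=\|\chi_E\mu\|_1^*\le|E|^{1/p'}\,\|\mu\|_p^*$, which is Corollary \ref{Propset}~b) with lower exponent $1$ and settles the claim for $p>1$. (For $p=1$ this forces $q=\infty$ and relies on the convention $\|\chi_E\|_{\L^\infty}\le1$ of \eqref{p-main}; that endpoint is delicate in the paper as well, since $\chi_E$ is not literally the Fourier transform of an $L^1$ function.)
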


\begin{proof}
Follows from Corollary \ref{incl1} and  (\ref{1loc}).
\end{proof}

\noindent
\begin{cor}\label{C1} \it{The functional $\|\ \|_p^*$ is a norm on $\M_p$ for every $p\in [1,\, \infty]$. }
\end{cor}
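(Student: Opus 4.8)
The plan is to verify directly that $\|\cdot\|_p^*$ satisfies the three norm axioms on the vector space $\M_p$. Absolute homogeneity, $\|c\mu\|_p^* = |c|\,\|\mu\|_p^*$ for $c\in\mathbb{C}$, and the triangle inequality, $\|\mu+\nu\|_p^* \le \|\mu\|_p^* + \|\nu\|_p^*$, are immediate from the defining formula \eqref{p*norm}: the map $h\mapsto \int_{\R^n} h\,d\mu$ is linear in $\mu$, and a supremum of absolute values of linear functionals over a fixed balanced convex set is automatically subadditive and positively homogeneous in the obvious way. I would simply write one display each deriving these from \eqref{p*norm}, with no difficulty expected.

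The only axiom requiring an argument is positive definiteness: I must show that if $\|\mu\|_p^* = 0$ then $\mu = 0$ as a measure. The key step here is to exploit Corollary \ref{inclusion}, which gives $\M_p \subset \M_{1,loc}$, together with the observation that for a bounded measurable set $E$ we have, by \eqref{ineqE} (or directly by Corollary \ref{incl1} and Theorem \ref{Holder}), $\|\mu\|_{1,E}^* \le C_{q}^n R^{n/q'}\,\|\mu\|_{p}^* = 0$ when $E\subset Q_R$; so it suffices to treat the case $p=1$ and show $\|\mu\|_{1,E}^* = 0$ for every bounded $E$ forces $\mu(F)=0$ for every bounded measurable $F$, and then let $R\to\infty$ using $\sigma$-finiteness.

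For the case $p=1$: if $\|\mu\|_{1,E}^* = 0$ then $\int_E h\,d\mu = 0$ for every $h\in\widehat{L}^\infty(\R^n)$, in particular for every $h\in\mathcal{S}(\R^n)$ (since $\mathcal{S}\subset \widehat{L}^\infty$, as $\widehat h\in\mathcal{S}\subset L^1$). Restricting attention to a fixed cube $Q_R$ and using that $\mu$ is a finite measure on $Q_R$, the vanishing of $\int_{Q_R} h\,d\mu$ for all Schwartz $h$ — equivalently for all $h\in C_c^\infty$ after a cutoff, or for all trigonometric polynomials / all continuous functions by density — implies via the Riesz representation theorem (uniqueness of the measure representing the zero functional on $C_0$) that $\chi_{Q_R}\mu = 0$. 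Letting $R\to\infty$ and invoking $\sigma$-finiteness gives $\mu = 0$. The main obstacle, such as it is, is purely a density/duality bookkeeping matter: one must be careful that testing against Schwartz functions (not merely continuous compactly supported ones) still separates measures — this is standard, since $\mathcal{S}$ is dense in $C_0(\R^n)$ in the uniform norm and the total variation of $\chi_{Q_R}\mu$ is finite, so the zero functional argument goes through. I would state this density fact explicitly and conclude.
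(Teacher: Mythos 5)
Your proposal is correct, and the overall skeleton (trivial verification of homogeneity and subadditivity, then reduction of definiteness to bounded sets via \eqref{ineqE} and Corollary \ref{incl1}) matches the paper. The difference is in the last step. The paper finishes in one line: for $E\subset Q_R$ it writes $\mu(E)=\int_{Q_R}\chi_E\,d\mu \le \|\chi_{Q_R}\|_{\L^\infty}\,\|\mu\|^*_{p,E}\le \|\mu\|^*_p=0$, i.e.\ it estimates $\mu(E)$ directly by pairing the measure with a characteristic function and invoking Proposition \ref{Propset}/Theorem \ref{Holder}. You instead test $\chi_{Q_R}\mu$ against Schwartz functions (which genuinely lie in $\L^\infty$, being Fourier transforms of $L^1$ functions), and conclude $\chi_{Q_R}\mu=0$ from density of $\mathcal S(\R^n)$ in $C_0(\R^n)$ together with the uniqueness part of the Riesz representation theorem. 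Each route has a trade-off. Your duality/density argument is in one respect more scrupulous: elements of $\L^\infty$ are continuous (they are Fourier transforms of $L^1$ functions), so $\chi_E$ and $\chi_{Q_R}$ are not literally admissible test functions, and the paper's one-liner quietly relies on the extended meaning of $\|\chi_E\|_{\L^\infty}$ coming from Theorem \ref{funct}; you avoid that issue entirely. The cost is that your appeal to Riesz requires $\chi_{Q_R}\mu$ to have finite total variation, which you assert but do not derive; for a general sigma-finite $\mu$ this needs a short extra argument (e.g.\ exhaust $Q_R$ by sets of finite $|\mu|$-measure and bound $|\mu|(Q_R)$ by $\|\mu\|^*_{1,Q_R}$ by testing against smooth, $\le 1$ in modulus, approximations of the polar sign of $\mu$). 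With that sentence added, your proof is complete and, if anything, slightly more careful than the one in the paper.
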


\begin{proof} It is trivial to verify that  for every $\mu,\ \sigma\in\M_p$ and every $\lambda\in {\mathbb C}$, 
$$\|\mu+\sigma\|_p^*\leq \|\mu\|_p^*+ \|\sigma\|_p^*, \quad \qquad \|\lambda \mu\|_p^*=|\lambda|\, \|\mu\|_p^*.
$$ 
We now prove that $\|\mu \|_p^*=0$ if and only if $\mu\equiv 0$, in the sense that $\mu(E)=0$ for every $\mu-$measurable set $E$.

  { In order to show that $\mu\equiv 0$, it is enough to verify that $\mu(E)=0$ for every bounded set $E$. }
Let $E$ be bounded and  $\mu-$measurable.  Assume that $E\subset Q_R$ for some $R>0$.  Using \eqref{ineqE} and Proposition  \ref{Propset}, we can see at once that
$$
\mu(E)= \int_E d\mu(x)=\int_{ {Q_R}} \chi_E d\mu(x) \leq \|\chi_{Q_R}\|_{\L^\infty} \|\mu\|_{p,E}^* \leq \|\mu\|_{p }^*=0
$$
and so $\mu(E)= 0$ for every $\mu-$measurable bounded  set $E$.         \end{proof}

\subsection{Properties of $\L^p $ spaces}

\quad

In this sub-section we will establish properties of  the spaces $\L^p(\R^n)$  defined in \eqref{hatp}.
We first   shows how measures of the form $d\mu=fdx$  behave with respect to the norms introduced when   $f\in \L^p$, 
\begin{theorem}\label{funct}
	{\it  Let   $ d\mu=fdx $, with $f\in \widehat{ L}^p (\R^n)$  for some $p\in [1, \infty]$; then   $\mu \in\M_p$ and  $$\| \mu \|_p^*=\|f\|_{\widehat{L}^p }.$$}
\end{theorem}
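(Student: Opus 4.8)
The plan is to prove the identity $\|\mu\|_p^* = \|f\|_{\widehat L^p}$ by establishing the two inequalities separately. For the direction $\|\mu\|_p^* \le \|f\|_{\widehat L^p}$, I would start from the definition \eqref{p*norm}: for any test function $h \in \widehat L^{p'}(\R^n)$ with $\|h\|_{\widehat L^{p'}} \le 1$, we have $\int_{\R^n} h\, d\mu = \int_{\R^n} h(x) f(x)\, dx$. The natural move is to pass to Fourier transforms via Parseval's relation $\int h f\, dx = \int \widehat h\, \widehat{f^{\vee}}\, dx$ (or more directly $\int h f\, dx = \int \widehat h(\xi)\, \widehat f(-\xi)\, d\xi$, using that the paper does not distinguish transform from inverse transform), and then apply Hölder's inequality on $L^{p}(\R^n) \times L^{p'}(\R^n)$: since $\|\widehat h\|_{p} = \|h\|_{\widehat L^{p'}} \le 1$ and $\|\widehat f\|_{p'} = \|f\|_{\widehat L^p}$, we get $|\int h\, d\mu| \le \|\widehat h\|_p \|\widehat f\|_{p'} \le \|f\|_{\widehat L^p}$. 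Taking the supremum over all such $h$ yields the bound, and in particular shows $\mu \in \M_p$.

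For the reverse inequality $\|\mu\|_p^* \ge \|f\|_{\widehat L^p}$, I need to exhibit (or approximate) a near-extremal test function $h$. Since $\|f\|_{\widehat L^p} = \|\widehat f\|_{p'}$, duality for $L^p$ spaces gives a function $g \in L^p(\R^n)$ with $\|g\|_p \le 1$ and $\int \widehat f(\xi) g(\xi)\, d\xi$ close to (or equal to, if the sup is attained) $\|\widehat f\|_{p'}$. I would then set $h$ to be the function whose Fourier transform is $g$ (so that $h \in \widehat L^{p'}$ with $\|h\|_{\widehat L^{p'}} = \|g\|_p \le 1$), and run Parseval backwards: $\int h\, d\mu = \int h f\, dx = \int \widehat f(\xi) g(\xi)\, d\xi \approx \|\widehat f\|_{p'} = \|f\|_{\widehat L^p}$. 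Care is needed at the endpoints: when $p = \infty$ (so $p' = 1$), the duality $L^1$–$L^\infty$ does not attain its supremum in general, so I would phrase this as an $\varepsilon$-argument — choose $g$ with $\|g\|_1 \le 1$ and $\int \widehat f g > \|\widehat f\|_\infty - \varepsilon$ — and then let $\varepsilon \to 0$. When $p = 1$ (so $p' = \infty$, $\widehat f \in L^\infty$) one should take $g$ supported on a set where $|\widehat f|$ is near its essential supremum, again with an $\varepsilon$; and one must check $g$ can be taken so that $h = \widehat g$ actually lies in $\widehat L^{p'}$, which is immediate from the definition \eqref{hatp}.

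The main obstacle I anticipate is not the functional-analytic skeleton but the measure-theoretic legitimacy of the Parseval step: one must justify that $\int_{\R^n} h(x) f(x)\, dx = \int_{\R^n} \widehat h(\xi) \widehat f(-\xi)\, d\xi$ when $f$ is merely in $\widehat L^p$ for large $p$ (so $f$ itself need not be in $L^1$ or $L^2$, and $\widehat f$ is a priori a distribution that happens to be an $L^{p'}$ function). The clean way around this is to use the distributional definition of $\widehat f$ quoted in the introduction: for $h \in \widehat L^{p'} \cap \mathcal S$ dense, $\int h f = \langle f, h\rangle = \langle \widehat f, \check h\rangle = \int \widehat f(\xi)\, h(-\xi)^{\vee}\dots$, matched against the $L^{p}$–$L^{p'}$ pairing, and then extend to all $h \in \widehat L^{p'}$ by density of $\mathcal S$ together with the continuity already established in the first inequality. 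I would also remark that Theorem~\ref{funct} shows the map $f \mapsto f\,dx$ embeds $\widehat L^p(\R^n)$ isometrically into $\M_p$, which is presumably the point of stating it.
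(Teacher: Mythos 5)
Your proposal is correct and follows essentially the same route as the paper: the $L^{p'}$-norm of $\widehat f$ is expressed by $L^{p}$--$L^{p'}$ duality and converted into the supremum defining $\|\mu\|_p^*$ via the multiplication formula $\int g\,\widehat f\,dt=\int \widehat g\, f\,dt$, with density of $\mathcal S(\R^n)$ justifying the Parseval step. The paper simply runs this as a single chain of equalities with the test functions taken in $\mathcal S(\R^n)$ throughout --- which also disposes of your (slightly glossed-over) point about whether $h=\widehat g$ lies in $\widehat L^{p'}$ for a general near-extremizer $g\in L^p$ with $p>2$ --- rather than proving the two inequalities separately.
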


Before discussing Theorem \ref{funct}, we prove the following

\begin{lemma}
	${\cal S}(\R^n)$ is dense in $\L^p(\R^n)$ for
	every $p\in [1, \infty]$.
\end{lemma}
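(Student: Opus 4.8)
The plan is to transport the problem through the Fourier transform. Recall from the definition \eqref{hatp}–\eqref{normhat} that $f\in\widehat L^p$ means $f=\widehat g$ for some $g\in L^{p'}(\R^n)$, and $\|f\|_{\widehat L^p}=\|\widehat f\|_{p'}=\|g\|_{p'}$ (using that $\widehat{\widehat g}(y)=g(-y)$ and our convention of not distinguishing the two transforms). Thus the map $\mathcal F:(\widehat L^p,\|\cdot\|_{\widehat L^p})\to(L^{p'},\|\cdot\|_{p'})$ is an onto isometry. Since $\mathcal F$ maps $\mathcal S(\R^n)$ bijectively onto $\mathcal S(\R^n)$, density of $\mathcal S$ in $\widehat L^p$ is \emph{equivalent} to density of $\mathcal S(\R^n)$ in $L^{p'}(\R^n)$.

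Now the density of $\mathcal S(\R^n)$ in $L^{p'}(\R^n)$ is a classical fact precisely when $p'<\infty$, i.e.\ when $p>1$; in that range simple functions with bounded support are dense, and each can be approximated in $L^{p'}$ by Schwartz functions (mollify and truncate). So for $p\in(1,\infty]$ the statement follows immediately from the isometry above. The remaining, and only delicate, case is $p=1$, where $p'=\infty$ and $\mathcal S(\R^n)$ is \emph{not} dense in $L^\infty(\R^n)$ (its closure is $C_0(\R^n)$). Here one must use the extra information built into $\widehat L^1$: its elements are not arbitrary functions in $L^1$ but Fourier transforms of $L^\infty$ functions. Given $f=\widehat g\in\widehat L^1$ with $g\in L^\infty$, I would approximate $f$ directly (not $g$): set $f_\varepsilon = f * \varphi_\varepsilon$ where $\varphi_\varepsilon$ is a Schwartz approximate identity with $\widehat{\varphi_\varepsilon}\in\mathcal S$ and $\widehat{\varphi_\varepsilon}\to 1$ pointwise boundedly (e.g.\ $\varphi_\varepsilon(x)=\varepsilon^{-n}\varphi(x/\varepsilon)$ with $\varphi(x)=e^{-\pi|x|^2}$, so $\widehat{\varphi_\varepsilon}(y)=e^{-\pi\varepsilon^2|y|^2}$). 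Then $\widehat{f_\varepsilon}=\widehat f\,\widehat{\varphi_\varepsilon}= g(-\cdot)\,e^{-\pi\varepsilon^2|\cdot|^2}$, which need not be Schwartz. To fix this, also convolve $f$ with a compactly-supported-Fourier-transform mollifier, or equivalently multiply $f$ by a Schwartz cutoff so that $\widehat f$ becomes compactly supported: i.e.\ replace $f$ by $f_{\varepsilon,R}$ with $\widehat{f_{\varepsilon,R}}(y)=g(-y)\,e^{-\pi\varepsilon^2|y|^2}\psi(y/R)$ for a fixed $\psi\in\mathcal S$ equal to $1$ near $0$. A function whose Fourier transform is a compactly supported $L^\infty$ (indeed $L^1$) function, times nothing else, is real-analytic and one checks $f_{\varepsilon,R}\in\mathcal S(\R^n)$; and $\|f-f_{\varepsilon,R}\|_{\widehat L^1}=\|g(-\cdot)(1-e^{-\pi\varepsilon^2|\cdot|^2}\psi(\cdot/R))\|_\infty$, which I would \emph{not} expect to go to $0$ in general — this is exactly the obstruction.

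This signals that the honest route for $p=1$ is the opposite one: show $\mathcal S$ is dense in $\widehat L^1$ by approximating $g\in L^\infty$ in the \emph{weak-$\ast$} sense is not enough either, since we need norm convergence in $L^\infty$. The resolution I would adopt: density should be interpreted (and is, I expect, what the paper uses) with the understanding that for $p=1$ one restricts to the natural closed subspace, or else the lemma is invoked only for $p>1$ in the sequel; more carefully, one shows the closure of $\mathcal S$ in $\widehat L^1$ equals $\{\,\widehat g : g\in C_0(\R^n)\,\}$, and this suffices for all later applications (the H\"older inequality Theorem~\ref{Holder} and Theorem~\ref{funct}), since the functionals $h\mapsto\int h\,d\mu$ appearing in \eqref{p*norm} are, for $\mu\in\M_p$, continuous on $\widehat L^{p'}$ and hence determined by their values on this closure. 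So the key steps, in order, are: (1) establish the isometry $\mathcal F:\widehat L^p\to L^{p'}$; (2) for $p>1$ quote density of $\mathcal S$ in $L^{p'}$ and pull back; (3) for $p=1$ identify the $\widehat L^1$-closure of $\mathcal S$ via mollification $f\mapsto f*\varphi_\varepsilon$ together with a Schwartz truncation of $\widehat f$, using dominated convergence on the $L^\infty$-side restricted to $C_0$. The main obstacle is precisely the endpoint $p=1$: $\mathcal S$ is genuinely not dense in $L^\infty$, so one cannot get norm density in $\widehat L^1$ on the nose, and the statement must be read with the $C_0$ caveat — the rest is routine mollify-and-truncate.
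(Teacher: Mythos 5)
Your treatment of $p\in(1,\infty]$ is correct and is essentially the paper's own device at the endpoint $p=\infty$: pull the problem back through the isometry $\F:\L^p(\R^n)\to L^{p'}(\R^n)$ and quote density of ${\cal S}(\R^n)$ in $L^{p'}(\R^n)$ for $p'<\infty$. However, your conclusion at $p=1$ --- that ${\cal S}(\R^n)$ is not norm-dense in $\L^1(\R^n)$ and the lemma needs a ``$C_0$ caveat'' --- is wrong, and the error comes from ignoring half of the definition \eqref{hatp}. An element of $\L^1(\R^n)$ is not the Fourier transform of an \emph{arbitrary} $g\in L^\infty(\R^n)$: by \eqref{hatp} it must also lie in $L^1(\R^n)$, so $g=\check f$ automatically belongs to the Wiener algebra (in particular to $C_0$, by Riemann--Lebesgue). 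More to the point, density never requires you to touch $g$ at all: for any $h\in L^1(\R^n)$ one has the trivial bound $\|h\|_{\L^1}=\|\widehat h\|_\infty\le\|h\|_1$, so choosing $\psi_n\in{\cal S}(\R^n)$ with $\|\psi_n-f\|_1\to0$ (possible since $f\in L^1$) gives $\|\psi_n-f\|_{\L^1}\to0$ at once. This is exactly the paper's one-line argument for $p\in[1,2]$, where the Hausdorff--Young inequality $\|h\|_{\L^p}=\|\widehat h\|_{p'}\le\|h\|_p$ reduces everything to density of ${\cal S}$ in $L^p$. The fact that ${\cal S}$ is not dense in $L^\infty$ is irrelevant: density is about the closure of ${\cal S}$ \emph{inside} $\L^1(\R^n)$, not inside the completion of $\L^1(\R^n)$ (which is indeed isometric to $C_0$ and strictly larger). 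Your long mollify-and-truncate detour, and the proposed weakening of the statement, should both be deleted; with the observation above the $p=1$ case is the easiest one, and the only case genuinely needing the Fourier pullback is $p>2$ (including $p=\infty$), where the $\L^p$ norm is no longer dominated by the $L^p$ norm.
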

\begin{proof}
	Since
	${\cal S}(\R^n) \subset \L^p(\R^n)\subset  L^p(\R^n)$   and ${\cal S}(\R^n)$ is dense in $L^p(\R^n)$ for
	every $p\in [1, \infty), $  we can see at once that ${\cal S}(\R^n)$ is also dense in $\L^p(\R^n)$.
	To see that ${\cal S}(\R^n)$ is dense also in $\L^\infty(\R^n)$, we observe that every $f\in \L^\infty(\R^n)$ is the
	image of $g\in L^1(\R^n)$ via the Fourier transform. We  can find functions  $\psi_n\in {\cal S}(\R^n) $ such that
	$\dsize\lim_{n\to\infty}\|\psi_n-g\|_1=0$. But  $$ \|\psi_n-g\|_1= \|\,\widehat{\widehat\psi_n}-\widehat f\,\|_1= \|{\widehat\psi_n}-  f\|_{\L^\infty},
	$$
	and so $\dsize\lim_{n\to\infty} \|{\widehat\psi_n}- f\|_{\L^\infty}=0$.  Since $\widehat\psi_n\in {\cal S}(\R^n)$,
	we have proved that ${\cal S}(\R^n)$  is   dense  in $\L^\infty(\R^n)$.                    \end{proof}

\begin{proof}[Proof of Theorem \ref{funct}] Since  ${\cal S}(\R^n)$ is dense in  $L^p(\R^n)$ and in $\L^{p'}(\R^n) $,
	and  the Fourier transform is one-to-one in ${\cal S}(\R^n)$, we can see at once that
	\begin{align*}
		\|f\|_{\widehat{L}^p } &= \|\widehat f\|_{p'}= \sup\limits_{g\in {\cal S}(\R^n):\atop{ \|g\|_{p}\le1}}\left|\int_{\mathbb R^n} g(t)\widehat f(t)\, dt\right|
		= \sup\limits_{g\in {\cal S}(\R^n):\atop{ \|g\|_{p}\le1}}\left|\int_{\mathbb R^n}\widehat  g(t) f(t)\, dt\right|\\
		&=\sup\limits_{h\in {\cal S}(\R^n):\atop{ \|\widehat h \|_{p}\le1}}\left|\int_{\mathbb R^n}h(t) f(t)\, dt\right|=
		\sup\limits_{h\in {\cal S}(\R^n):\atop{ \|h\|_{\L^{p'} }\le1}}\left|\int_{\mathbb R^n}h(t) f(t)\, dt\right|= \| \mu \|_p^*,
	\end{align*}
	which completes the proof.    \end{proof}

\begin{remark}\label{6} If $d\mu=fdx$ is  as in Theorem \ref{funct}  and $p\in [1,2]$, then there holds $$\| \mu_f\|_p^*=\|f\|_{\widehat{L}^p }=\|\widehat f\|_{p'}\leq  \|f\|_p.$$ \end{remark}

\begin{cor}\label{Propset} Let $E\subset \R^n$ be a (Lebesgue) measurable set.  
	
	a) For every $p\in [1,\infty]$, we have 
	\begin{equation}\label{p-main} \|\chi_E\|_{\L^p }\le   | E |^{\frac 1p}. \end{equation}
	
	b)   For every $1\leq p\leq q\leq \infty $ and every  $\mu\in \M_q$, we have  $$\|\mu\|_{p, E}^*\le  \|\mu\|_q^* | E |^{\frac 1r},\qquad \mbox{where $\frac 1r=\frac 1p-\frac 1q$}. $$  
\end{cor}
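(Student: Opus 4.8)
The plan is to read part (a) off from the Hausdorff--Young inequality together with Fourier inversion, and then to deduce part (b) by feeding $\chi_E$ into the H\"older-type inequality of Theorem \ref{Holder}.

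For part (a): we may assume $|E|<\infty$, since otherwise the right-hand side of \eqref{p-main} equals $+\infty$ for $p<\infty$ and there is nothing to prove. Then $\chi_E\in L^1\cap L^2\cap L^p(\R^n)$. The first point to settle is that $\chi_E$ genuinely belongs to $\widehat L^p(\R^n)$, so that the left-hand side of \eqref{p-main} is defined through \eqref{normhat}: by the Hausdorff--Young inequality $\widehat{\chi_E}\in L^{p'}(\R^n)$, and since $\chi_E\in L^1\cap L^2$, Fourier inversion gives $\chi_E=\widehat g$ with $g(x)=\widehat{\chi_E}(-x)\in L^{p'}(\R^n)$; hence $\chi_E\in\widehat L^p$. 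Now, directly from \eqref{normhat} and again from Hausdorff--Young,
$$\|\chi_E\|_{\widehat L^p}=\|\widehat{\chi_E}\|_{p'}\le\|\chi_E\|_p=|E|^{1/p},$$
with equality when $p=2$ by Plancherel's theorem. This is \eqref{p-main}.

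For part (b): fix $1\le p\le q\le\infty$ and set $\tfrac1r=\tfrac1p-\tfrac1q$, so that $r\in[1,\infty]$ and $\tfrac1p=\tfrac1r+\tfrac1q$. Given $\mu\in\M_q$, I would apply Theorem \ref{Holder} with $\chi_E\in\widehat L^r(\R^n)$ in the role of $f$ and with $q$ in the role of the exponent $p$ of that theorem; since the exponents satisfy exactly the relation required there, this yields
$$\|\mu\|_{p,E}^*=\|\chi_E\,\mu\|_p^*\le\|\chi_E\|_{\widehat L^r}\,\|\mu\|_q^*\le|E|^{1/r}\,\|\mu\|_q^*,$$
the last inequality being part (a) applied with exponent $r$. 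In particular $\mu\in\M_{p,E}$, as claimed.

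Almost all of this is bookkeeping with conjugate exponents; the one step deserving attention is the membership $\chi_E\in\widehat L^p$ (equivalently $\widehat{\chi_E}\in L^{p'}$), which is where the finiteness of $|E|$ and the Hausdorff--Young inequality enter, and which is precisely what makes Theorem \ref{Holder} applicable in part (b). I do not anticipate any substantial obstacle beyond this.
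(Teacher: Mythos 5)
Your part (b) is exactly the paper's argument (Theorem \ref{Holder} applied to $f=\chi_E\in\L^r$ together with part (a) at exponent $r$), and your part (a) for $p\in[1,2]$ coincides with the paper's (which quotes Remark \ref{6}, i.e.\ Hausdorff--Young); your extra care about the membership $\chi_E\in\L^p$ via Fourier inversion is a point the paper glosses over. The problem is the range $p\in(2,\infty]$. There the inequality $\|\widehat{\chi_E}\|_{p'}\le\|\chi_E\|_p$ that you attribute to Hausdorff--Young is not an instance of that inequality: Hausdorff--Young bounds $\|\widehat f\|_{s'}$ by $\|f\|_s$ only for $s\in[1,2]$, and for $p>2$ you are asking for the reverse direction. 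It is in fact false. Take $n=1$ and $E=[0,1]$, so $|\widehat{\chi_E}(y)|=\bigl|\tfrac{\sin\pi y}{\pi y}\bigr|\le 1$; then for $p'<2$ one has $\|\widehat{\chi_E}\|_{p'}^{p'}=\int_{\R}\bigl|\tfrac{\sin\pi y}{\pi y}\bigr|^{p'}dy>\int_{\R}\bigl|\tfrac{\sin\pi y}{\pi y}\bigr|^{2}dy=1=|E|^{p'/p}$, so $\|\chi_E\|_{\L^p}>|E|^{1/p}$; and for $p=\infty$ the norm $\|\widehat{\chi_E}\|_1$ is infinite, so $\chi_E\notin\L^\infty$ at all. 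Your membership claim also breaks down here: for a general set of finite measure, $\widehat{\chi_E}$ need not lie in any $L^{q}$ with $q<2$. The same gap then propagates into your part (b) whenever the auxiliary exponent $r=\frac{pq}{q-p}$ exceeds $2$ (e.g.\ $p=3/2$, $q=2$ gives $r=6$), since you invoke part (a) at exponent $r$.

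For comparison, the paper does not attempt the pointwise Hausdorff--Young route for $p>2$; it instead writes $\|\chi_E\|_{\L^p}=\|\sigma\|^*_{p,E}$ (with $d\sigma=\chi_E\,dx$) as a supremum over the unit ball of $\L^{p'}$ and claims that this ball is contained in the unit ball of $L^{p'}$, after which ordinary H\"older gives $|E|^{1/p}$. You should be aware, however, that this inclusion of unit balls is itself equivalent to $\|f\|_{p'}\le\|\widehat f\|_p$ for $f\in\L^{p'}$ with $p'<2$, i.e.\ to the same reverse Hausdorff--Young inequality, and the example above shows it cannot hold with constant $1$. So the obstruction you ran into is not an artifact of your write-up: the case $p>2$ of \eqref{p-main} genuinely requires a different idea (or a restriction to $p\in[1,2]$, which is all that is used in the application to Theorem \ref{main} when $1<p\le 2$). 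At minimum, your proof as written establishes the corollary only for $p\in[1,2]$ in part (a) and for those $(p,q)$ with $r\le 2$ in part (b).
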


We have used the standard convention $\frac 1\infty=0$.  Thus,   \eqref{p-main} yields  $\|\chi_E\|_{\L^\infty}\le 1$, for every set $E$.

\begin{proof}
	We first prove a). When $p\in [1,2]$,  Remark \ref{6}  yields
	 $$\|\chi_E\|_{\L^p } \le \|\chi_E\|_{p } = | E |^{\frac 1p}.$$
	
	Assume now  $p\in (2,\infty)$. By the Hausdorff-Young  inequality, we can see at once that
	$$\{f\in \L^{p'} \ : \ \|f\|_{\L^{p'}}=\|\widehat f\|_p\leq 1\} \subset \{ f\in  L^{p'} (\R^n)\ : \ \|f\|_{p'}\leq 1\}.	$$
	In view of this observation and  Theorem \ref{funct}, we can  let $d\sigma=\chi_E dx$ and write the following chain of inequalities:
	\begin{align}
		\nonumber	\|\chi_E\|_{\L^p }=\|\sigma\|_{p, E}^* &=    \sup_{f\in \L^{p'}(\R^n) : \atop{\|  f\|_{\L^{p'}}\leq 1}}\left|\int_{\R^n} \chi_E(x)f(x)dx\right|
		\\\label{inf}
		&\leq \sup_{f\in L^{p'}(\R^n) : \atop{\|  f\|_{p'}\leq 1}}\left|\int_{\R^n} \chi_E(x)f(x)dx\right| \\\nonumber	\label{inf2} &\leq |E|^{\frac 1p} \| f\|_{p'} \leq |E|^{\frac 1p}.
	\end{align}
	We have used   H\"older's inequality  in the last step.  
	
	When $p=\infty$, it follows from \eqref{inf} that
	$$
	\sup_{f\in L^{1}(\R^n) : \atop{\|  f\|_{1}\leq 1}}\left|\int_{\R^n} \chi_E(x)f(x)dx\right|\leq \sup_{f\in L^{1}(\R^n) : \atop{\|  f\|_{1}\leq 1}} \int_{\R^n} \chi_E(x)|f(x)|dx\leq 1.
	$$

	Part b) follows  from H\"older's inequality \eqref{Holder} and  part  a).  Indeed, letting  $r=\frac{pq}{q-p}$, we have
	$$
	\|\mu\|_{p, E}^*=  \|\chi_E\mu\|_{p}^* \leq  \|\chi_E\|_{\L^r} \|\mu\|_{q}^*  \leq |E|^{\frac 1r}  \|\mu\|_{q}^*.
	$$
	The proof of the corollary is complete.      \end{proof}

\medskip

\medskip
We  use Theorem \ref{funct} to prove  inclusion relations of the $\L^p$ spaces and their duals.  Recall that the dual of a normed space $X$, denoted by $(X)'$, 
 is  the set of  linear functionals $L: V  \to {\mathbb C}$   such that $\sup_{ \|f\|_{X}\leq 1} |L(f)| <\infty$.  

By definition,  $\L^p(\R^n)= L^p(\R^n)$ when $p\in [1,2]$ but  in general  $ \L^p(\R^n)$ is a proper subspace of $ L^p(\R^n)$.
For example, the Riemann-Lebesgue Lemma yields that $ \L^\infty(\R^n) $ is a space of  uniformly  continuous functions that go  to zero at infinity.

Even though $L^p(\R^n)=\L^p(\R^n)$ when $p\in [1,2]$, the norms on these spaces are different and so the   duals of these spaces  are different too.
When  $p\leq 2$,    the Hausdorff-Young inequality yields,
$$
\|f\|_{\L^p}= \|\hat f\|_{p'}\leq \|f\|_p.
$$
When $p=2$ we have  $\|  f\|_2=\|f\|_{\L^2}$   but  when $p> 2$  the inequality above can be strict.   

%
%
We prove the following
\begin{prop}\label{dual}{\it 
	For every $p\in [1, \infty]$, we have
	$$
	\L^{p'}(\R^n) \subset ( \L^p(\R^n))'. 
	$$
	When $p\in [1,2]$, we have $
	\L^{p'}(\R^n) \subset ( \L^p(\R^n))'\subset L^{p'}(\R^n). 
	$}
\end{prop}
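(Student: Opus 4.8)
The plan is to represent each $g\in\L^{p'}(\R^n)$ as the linear functional $L_g(f)=\int_{\R^n}f(x)g(x)\,dx$ on $\L^p(\R^n)$, and, in the range $p\in[1,2]$, to represent an arbitrary element of $(\L^p(\R^n))'$ as integration against an $L^{p'}$ function by the Riesz representation theorem.

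First I would check that $L_g$ is well defined: since $\L^p(\R^n)\subset L^p(\R^n)$ and $\L^{p'}(\R^n)\subset L^{p'}(\R^n)$ by \eqref{hatp}, H\"older's inequality gives $fg\in L^1(\R^n)$. To bound $|L_g(f)|$ in terms of $\|f\|_{\L^p}$, write $f=\widehat u$ with $u\in L^{p'}(\R^n)$; since $\widehat f=\widehat{\widehat u}$ is the reflection of $u$, we have $\|u\|_{p'}=\|\widehat f\|_{p'}=\|f\|_{\L^p}$, and the multiplication formula $\int_{\R^n}\widehat u\cdot g\,dx=\int_{\R^n}u\cdot\widehat g\,dx$ together with H\"older's inequality yields
\[
|L_g(f)|=\left|\int_{\R^n}u(x)\,\widehat g(x)\,dx\right|\le\|u\|_{p'}\,\|\widehat g\|_p=\|f\|_{\L^p}\,\|g\|_{\L^{p'}},
\]
where $\widehat g\in L^p(\R^n)$ with $\|\widehat g\|_p=\|g\|_{\L^{p'}}$ by \eqref{normhat}. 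Hence $L_g\in(\L^p(\R^n))'$ with norm at most $\|g\|_{\L^{p'}}$, and $g\mapsto L_g$ is injective, since $L_g=0$ forces $\int_{\R^n}\psi g\,dx=0$ for all $\psi\in{\cal S}(\R^n)\subset\L^p(\R^n)$, hence $g=0$ a.e. If one prefers to stay strictly within the machinery already set up, one can instead bound $L_g$ on ${\cal S}(\R^n)$ using the definition $\langle\widehat v,\psi\rangle=\int_{\R^n}v\,\widehat\psi\,dx$ (with $g=\widehat v$), extend $L_g$ to $\L^p(\R^n)$ by the density of ${\cal S}(\R^n)$ in $\L^p(\R^n)$ proved above, and identify the extension with $f\mapsto\int_{\R^n}fg\,dx$ by choosing ${\cal S}$-approximants of $f$ converging to it simultaneously in $\L^p$ and in $L^p$ (available from the isometry ${\cal F}:L^{p'}\to\L^p$ and the Hausdorff-Young inequality for $p\ge2$, and from the embedding $L^p\hookrightarrow\L^p$ for $p\le2$). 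Either way, $\L^{p'}(\R^n)\subset(\L^p(\R^n))'$ for every $p\in[1,\infty]$.

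For the inclusion $(\L^p(\R^n))'\subset L^{p'}(\R^n)$ when $p\in[1,2]$, I would use the two facts recorded just before the statement: $\L^p(\R^n)=L^p(\R^n)$ as vector spaces, and $\|f\|_{\L^p}=\|\widehat f\|_{p'}\le\|f\|_p$. Then every $L\in(\L^p(\R^n))'$ satisfies $|L(f)|\le\|L\|\,\|f\|_{\L^p}\le\|L\|\,\|f\|_p$ for all $f\in L^p(\R^n)$, so $L\in(L^p(\R^n))'$; as $1\le p<\infty$, the Riesz representation theorem gives a unique $h\in L^{p'}(\R^n)$ with $L(f)=\int_{\R^n}fh\,dx$, and identifying $L$ with $h$ yields $(\L^p(\R^n))'\subset L^{p'}(\R^n)$. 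This identification coincides with the one in the first part (where $L_g$ corresponds to $g\in\L^{p'}(\R^n)\subset L^{p'}(\R^n)$), so the two inclusions chain together as stated.

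The only step requiring care is the multiplication formula $\int\widehat u\cdot g=\int u\cdot\widehat g$ when $p$ or $p'$ exceeds $2$, in which case some of the Fourier transforms are a priori only distributions; this is handled by approximation with Schwartz functions, using the density lemma above. Everything else---H\"older's inequality, the isometry property of ${\cal F}$, and the classical duality $(L^p)'=L^{p'}$ for $1\le p<\infty$ (including $(L^1)'=L^\infty$)---is standard and already available.
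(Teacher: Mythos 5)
Your proposal is correct and follows essentially the same route as the paper: you represent $g$ by the functional $L_g(f)=\int_{\R^n} fg\,dx$, establish the key bound $|L_g(f)|\le\|f\|_{\L^p}\|g\|_{\L^{p'}}$ (the paper gets this by citing its H\"older-type inequality \eqref{hold1} together with Theorem \ref{funct}, whereas you re-derive the same estimate directly from the multiplication formula $\int \widehat u\, g=\int u\,\widehat g$ and classical H\"older), and for $p\in[1,2]$ use $\|f\|_{\L^p}=\|\widehat f\|_{p'}\le\|f\|_p$ together with the classical duality $(L^p)'=L^{p'}$. Your write-up is in fact somewhat more careful than the paper's about the distributional interpretation of the Fourier transforms when $p$ or $p'$ exceeds $2$ and about the injectivity of $g\mapsto L_g$.
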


\begin{proof}
	For a given $g\in \L^{p'}(\R^n)$,  we  let  $d\mu =gdx$  
	and we  let $ L_g:\L^p(\R^n)\to {\mathbb C}$,
	$$L_g(f)=\int_{\R^n} f(x)g(x)dx.
	$$
	By H\"older's inequality   \eqref{hold1}  and Theorem \ref{funct}
	$$|L_g(f)|= \left|\int_{\R^n} f(x)g(x)dx\right|\leq \|f\|_{\L^p}\|\mu \|_{ p'}^*=\|f\|_{\L^p}\|g \|_{\L^{p'}}
	$$ and so $L\in ( \L^p(\R^n))'$.
	
	When $p\leq 2$,  for every  $L\in ( \L^p(\R^n))'$, we have that 
	$$|L(f)|  \leq C\|f\|_{\L^p}=C\|\hat f\|_{p'} \leq C\|f\|_p
	$$
	and so     $L\in ( L^p(\R^n))'= L^{p'}(\R^n)$.  
\end{proof}

\subsection{Fourier transform of finite measures}

The  Fourier transform of a finite  Borel measure $\mu$ is  the function defined as
\begin{equation}\label{def-FT}
	\widehat{ \mu}(y)=\int_{\R^n} e^{-2\pi i x\cdot y}d\mu(x).
\end{equation}
To distinguish it from the Fourier transform for functions, it is sometimes called the Fourier-Stieltjes transform.
It is well-known (see, e.g., \cite[\S 5.3]{BN} or \cite[\S 4.4]{RS}) that the function $\widehat{ \mu}$ is continuous and bounded.
By the Riemann-Lebesgue Lemma, the Fourier transform of an $L^1$ function vanishes at infinity,  but the Fourier transform
of a  $\M_1$ measure  does not need to do so. For example, we have shown in Example \ref{ex1}  that the Delta measure $\mu=\delta_a $
is in $\M_1$; its Fourier transform is   $\widehat{ \mu}(x)=e^{2\pi i a\cdot x}$, and  $ |\widehat{ \mu}(x)|\equiv 1$.

We  prove the following analog of the Hausdorff-Young inequality.
\begin{prop}\label{HY}
	{\it  Let  $ \mu\in  \M_{p}$,   with $1\leq p \leq 2 $.   Then,  $\widehat{ \mu}\in  {L}^{p'} (\R^n)$, and}
	\begin{equation}\label{HY-ineq}
		\|\widehat{ \mu}\|_{p'}\leq \| \mu\|_p^*.
\end{equation}                       \end{prop}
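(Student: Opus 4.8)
The plan is to identify $\widehat{\mu}$ as the action of $\mu$ against the characters $e^{-2\pi i x\cdot y}$ and then reduce the $L^{p'}$ bound on $\widehat{\mu}$ to a duality pairing controlled by $\|\mu\|_p^*$. First I would recall that, by duality for $L^{p'}(\R^n)$ (with $1\le p\le 2$, so $2\le p'\le\infty$; the endpoint $p=1$ being the already-noted elementary fact that $\|\widehat\mu\|_\infty\le\|\mu\|_1^*$ since each character has $\widehat L^\infty$-norm $1$), it suffices to show that for every $\psi\in{\cal S}(\R^n)$ with $\|\psi\|_p\le 1$ one has $\left|\int_{\R^n}\widehat\mu(y)\psi(y)\,dy\right|\le\|\mu\|_p^*$, since ${\cal S}(\R^n)$ is dense in $L^p(\R^n)$ and such $\psi$'s exhaust the unit ball in the dual sense. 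Here I must be slightly careful that $\widehat\mu$ is a priori only a bounded continuous function, not yet known to be in $L^{p'}$; but testing against Schwartz functions and obtaining a uniform bound is exactly what yields $\widehat\mu\in L^{p'}$ with the stated norm estimate.

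Next I would carry out the Fubini exchange. For $\psi\in{\cal S}(\R^n)$,
\begin{align*}
\int_{\R^n}\widehat\mu(y)\psi(y)\,dy
&=\int_{\R^n}\left(\int_{\R^n}e^{-2\pi i x\cdot y}\,d\mu(x)\right)\psi(y)\,dy\\
&=\int_{\R^n}\left(\int_{\R^n}e^{-2\pi i x\cdot y}\psi(y)\,dy\right)d\mu(x)
=\int_{\R^n}\widehat\psi(x)\,d\mu(x).
\end{align*}
The interchange is justified because $\mu$ is a finite measure and $\psi\in L^1$, so the integrand $e^{-2\pi i x\cdot y}\psi(y)$ is integrable for the product measure $|\mu|\times dx$. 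Now $\widehat\psi\in{\cal S}(\R^n)\subset\L^{p'}(\R^n)$, and by the very definition \eqref{normhat} of the $\widehat L^{p'}$-norm together with $\widehat{\widehat\psi}(x)=\psi(-x)$ (we do not distinguish Fourier transform from its inverse), we have $\|\widehat\psi\|_{\L^{p'}}=\|\widehat{\widehat\psi}\|_{p}=\|\psi\|_p\le 1$. Therefore \eqref{basic} applies and gives
$$\left|\int_{\R^n}\widehat\psi(x)\,d\mu(x)\right|\le\|\widehat\psi\|_{\L^{p'}}\,\|\mu\|_p^*\le\|\mu\|_p^*.$$

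Combining the two displays, $\left|\int_{\R^n}\widehat\mu(y)\psi(y)\,dy\right|\le\|\mu\|_p^*$ for all $\psi\in{\cal S}(\R^n)$ with $\|\psi\|_p\le1$; taking the supremum over such $\psi$ and using density of ${\cal S}(\R^n)$ in $L^p(\R^n)$ together with the standard duality $(L^p)'=L^{p'}$ (valid since $p<\infty$; at $p=1$ this gives the sup norm directly) yields $\widehat\mu\in L^{p'}(\R^n)$ with $\|\widehat\mu\|_{p'}\le\|\mu\|_p^*$, which is \eqref{HY-ineq}. The only genuinely delicate point is the one flagged above: a priori $\widehat\mu$ need not lie in $L^{p'}$, so one cannot simply write down the duality pairing for an element of $L^{p'}$; the argument must instead run through the uniform bound on pairings with Schwartz test functions and let the converse of Hölder/duality upgrade this to membership in $L^{p'}$. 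Everything else is a routine Fubini argument plus an application of the key inequality \eqref{basic} to the Schwartz function $\widehat\psi$.
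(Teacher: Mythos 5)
Your proof is correct and follows essentially the same route as the paper's: duality against Schwartz test functions, a Fubini exchange giving $\int\widehat\mu\,\psi=\int\widehat\psi\,d\mu$, and the observation that $\|\widehat\psi\|_{\L^{p'}}=\|\psi\|_p\le 1$ so that \eqref{basic} applies; you are in fact more careful than the paper about the a priori issue of whether $\widehat\mu\in L^{p'}$. One small caveat: the parenthetical claim that each character has $\L^\infty$-norm $1$ is not right (a character is not the Fourier transform of an $L^1$ function, so it does not lie in $\L^\infty(\R^n)$ at all), but this is harmless since your main duality argument already covers the endpoint $p=1$.
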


\begin{proof}
	We have observed that the Fourier transform of a finite measure is always  bounded, so the  proposition is trivial for $p=1$. When $p\in (1,2]$, we have 
	$$\|\widehat{ \mu}\| _{p'} =\sup_{h\in {\cal S}(\R^n) :\atop \|h\|_{p}\leq 1}\int_{\R^n} h (y)\widehat{ \mu}(y)\,dy.$$
	By Fubini's theorem,
	\begin{equation}\label{change}
		\int_{\R^n} h (y)\widehat{ \mu}(y)\,dy=\int_{\R^n}\int_{\R^n} h(y) e^{-2\pi i x\cdot y}\,d\mu(x)\,dy=\int_{\R^n} \widehat h(x)\,  d\mu(x).
	\end{equation}
	In view of \eqref{change} and  the fact that  $\|\,\widehat  h\,\|_{p'}\leq \|h\|_p\leq 1$, we can see at once that
	$$
	\|\widehat{ \mu}\| _{p'} =\sup_{h\in{\cal S}(\R^n) : \atop \|h\|_{p}\leq 1}\int_{\R^n} \widehat h (y)\, {\mu}(y)
	\leq \sup_{k\in{\cal S}(\R^n) : \atop \|  \, \widehat k\,\|_{p'}\leq 1} \left|\int_{\R^n} k(x) \, d\mu(x)\right| =\| \mu\|_{p}^* ,$$
	which completes the proof.   \end{proof}

\medskip

\begin{example} If $ \mu_f$ is generated by the singular function $f$ in \cite{WW},   we have
	
	$$|\widehat{\mu_f}(x)|=O\Big(\frac1{|x|^\delta}\Big)$$
	for $|x|$ large, with $0<\delta<\frac12$. Then $\widehat{\mu_f}\in L^{p'}(\R),$ with $\frac1\delta<p'<\infty$, and
	correspondingly, $2<p'<\infty$. By this, $\|\mu_f\|_p^*<\infty$, since
	
	\begin{align*} \|\mu_f\|_p^*&=\sup\limits_{\|h\|_{\L^{p'}}\le1}\left|\int_{\mathbb R} h(t)\,df(t)\right|
		=\sup\limits_{\|g\|_{L^p}\le1}\left|\int_{\mathbb R} \widehat{g}(x)\,{df}(x)\,\right|\\
		&=\sup\limits_{\|g\|_{L^p}\le1}\left|\int_{\mathbb R} {g}(x)\widehat{df}(x)\,dx\right|<\infty,\end{align*}
	because of $g\in L^p$ and $\widehat{\mu_f}\in L^{p'},$ with $1<p<\frac1{1-\delta}<2.$
	
	We have used the pioneer example of a singular function in \cite{WW} but there are more subtle ones. However,
	for all of them there is a barrier to $L^2$, like $0<\delta<\frac12$ above; see, e.g., \cite{Kor} and references therein.
\end{example}

\medskip
 \subsection{Convolution of a function and a measure}

Let $\mu$ be a sigma-finite Borel measure, and let $f:\R^n\to\R$ be a measurable function such that  the function 
\begin{equation}\label{convfm} x\to  \int_{\R^n} f(x-y) d\mu(y)\end{equation} 
is finite for a.e. $x\in \R^n$. The convolution of $f$ and $ \mu$,   denoted by  $f* \mu$, is the function defined in (\ref{convfm}). 
We prove the following analog of the  Young inequality for convolution.

\begin{prop}
{\it	If $\mu\in\M_p$ and $f\in \L^q(\R^n)$ with $\frac 1p+\frac 1q=\frac 1r$,  then $f* \mu\in \L^r(\R^n)$ and}
	$$\|f* \mu\|_{\L^r}\leq \|f\|_{\L^q}\| \mu\|_p^*.$$
	\end{prop}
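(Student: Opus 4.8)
The plan is to mimic the proof of Theorem~\ref{Holder}, working on the Fourier side and using the fact that $\widehat{f*\mu}=\widehat f\,\widehat\mu$ pointwise. First I would reduce to the case $\|f\|_{\L^q}=1$ by homogeneity, replacing $f$ with $f/\|f\|_{\L^q}$. Then I would use the duality definition of the $\L^r$-norm:
$$\|f*\mu\|_{\L^r}=\|\widehat{f*\mu}\|_{r'}=\sup_{h\in\mathcal S(\R^n):\,\|h\|_{r}\le1}\left|\int_{\R^n}\widehat{f*\mu}(y)\,h(y)\,dy\right|,$$
where density of $\mathcal S(\R^n)$ is used to restrict the supremum to Schwartz $h$. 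The next step is to justify $\widehat{f*\mu}=\widehat f\,\widehat\mu$: for $f\in\L^q\subset L^q$ and a finite measure $\mu$ this is standard, and in our setting (after localizing, or arguing via Fubini on the pairing) it holds at least as an identity of tempered distributions, which is all we need once we test against $h$.

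Having reached $\int \widehat f(y)\,\widehat\mu(y)\,h(y)\,dy$, I would unfold $\widehat\mu(y)=\int e^{-2\pi i x\cdot y}d\mu(x)$ and apply Fubini, exactly as in \eqref{change}, to rewrite the integral as $\int_{\R^n}\widehat{(\widehat f\,h)}(x)\,d\mu(x)$; here Fubini is legitimate because $\widehat f\,h\in L^1$ (product of $L^{q'}$ and $L^q$... more simply $h\in\mathcal S$ and $\widehat f\in L^{q'}$) and $\mu$ restricted to the relevant set is finite. Then by \eqref{basic} this is bounded by $\|\widehat{(\widehat f\,h)}\|_{\L^{p'}}\,\|\mu\|_p^*$ — wait, more precisely by $\|\widehat f\,h\|_{\L^{p'}}\|\mu\|_p^*$ where $\|\widehat f\,h\|_{\L^{p'}}=\|\widehat{\widehat f\,h}\|_{p}=\|\widehat{\widehat f}*\widehat h\|_p$. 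I would now run the Young/Hausdorff–Young estimate: with $\frac1r+\frac1{q'}=1+\frac1p$ coming from $\frac1p+\frac1q=\frac1r$, Young's convolution inequality and Hausdorff–Young give $\|\widehat{\widehat f}*\widehat h\|_p\le\|f\|_{q'}$... I should instead keep $f$ and $h$ in the right slots: $\|\widehat f\cdot h\|_{\L^{p'}}=\|\widehat f * \widehat h\|_p$? No — $\widehat{(\widehat f\cdot h)} = \widehat{\widehat f}*\widehat h = f(-\cdot)*\widehat h$, and then $\|f(-\cdot)*\widehat h\|_p\le\|f\|_{q}\|\widehat h\|_{r}\le\|f\|_{\L^{q}}$... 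I'd need to be careful, but the cleaner route is to observe directly that $\widehat f\cdot h\in\L^{p'}$ with $\|\widehat f\,h\|_{\L^{p'}}\le\|f\|_{\L^q}\|h\|_{\L^{r'}}$ by the same convolution argument as in Theorem~\ref{Holder} (since $\widehat{\widehat f\, h}=\widehat{\widehat f}*\widehat h$, apply Young with exponents matching $\frac1p=\frac1q+\frac1r-1$ in conjugate form), and conclude $\le\|\mu\|_p^*\|f\|_{\L^q}=\|\mu\|_p^*$.

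The main obstacle I anticipate is purely technical rather than conceptual: making rigorous the identity $\widehat{f*\mu}=\widehat f\,\widehat\mu$ and the Fubini interchange without a priori knowing that $f*\mu$ is a nice function, since $\mu$ is only sigma-finite (not finite) and not assumed positive. I would handle this by localization — decompose $\R^n$ into bounded pieces on which $\mu$ is finite (using that $\mu\in\M_p\subset\M_{1,loc}$ by Corollary~\ref{inclusion}), prove the estimate with uniform constants on each piece, and pass to the limit by monotone/dominated convergence — or alternatively by interpreting all identities distributionally against the dense class $\mathcal S(\R^n)$, which suffices to compute the $\L^r$-norm via \eqref{normhat}. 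Everything else is a rearrangement of the already-established Hausdorff–Young and Young inequalities together with the defining inequality \eqref{basic}.
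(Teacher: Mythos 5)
Your skeleton is the same as the paper's: compute the $\L^r$ norm by duality against Schwartz test functions, use Fubini to move the test function onto $\mu$, invoke \eqref{basic}, and finish with a Fourier-side product estimate. The paper, however, pairs $f*\mu$ directly with $h$ and writes $\int h\,(f*\mu)\,dx=\int (h*\tilde f)\,d\mu$ with $\tilde f(t)=f(-t)$, so that $\widehat\mu$ never appears; the ``technical obstacle'' you postpone to the end (justifying $\widehat{f*\mu}=\widehat f\,\widehat\mu$ for a sigma-finite, possibly non-finite, complex measure) simply does not arise. This matters: the paper defines $\widehat\mu$ only for finite measures, and for $\mu={\cal L}\in\M_\infty$ (Example \ref{Leb}) it is not a function at all, so your route genuinely requires the localization argument you only sketch, whereas the paper's route needs nothing beyond Fubini on the double integral.

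More seriously, your closing estimate does not close as written. The function you pair with $d\mu$ after Fubini is $k=\widehat{(\widehat f\,h)}$, and \eqref{basic} charges you $\|k\|_{\L^{p'}}=\|\widehat k\,\|_p=\|\widehat f\,h\|_p$ --- the plain $L^p$ norm of a pointwise product, which H\"older bounds by $\|\widehat f\,\|_{q'}\|h\|_r=\|f\|_{\L^q}\|h\|_r\le\|f\|_{\L^q}$, provided $\frac1p=\frac1{q'}+\frac1r$. Your first instinct, $\|\widehat{(\widehat f\,h)}\|_{\L^{p'}}$, was the correct quantity; the ``more precise'' replacement by $\|\widehat f\,h\|_{\L^{p'}}=\|f(-\cdot)*\widehat h\,\|_p$ is a different quantity, and the Young inequality you then reach for would need control of $\|f\|_a$ and $\|\widehat h\,\|_b$, neither of which the hypotheses supply ($f\in\L^q$ controls $\|\widehat f\,\|_{q'}$, and your normalization controls $\|h\|_r$: both norms sit on the wrong side of the Fourier transform for that convolution). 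Your fallback $\|\widehat f\,h\|_{\L^{p'}}\le\|f\|_{\L^q}\|h\|_{\L^{r'}}$ also pairs with the wrong normalization, since $\|h\|_{\L^{r'}}=\|\widehat h\,\|_r$ while you took $\|h\|_r\le1$. Finally, the H\"older relation $\frac1p=\frac1{q'}+\frac1r$ that makes the correct estimate work is equivalent to $1+\frac1r=\frac1p+\frac1q$, which is what the paper's proof actually invokes (the natural Young-type scaling); it is not the relation $\frac1p+\frac1q=\frac1r$ in the statement, which you faithfully manipulate into $\frac1r+\frac1{q'}=1+\frac1p$ and which leads your exponent bookkeeping astray.
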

\begin{proof}
	%
	In view of  Proposition \ref{funct},
\begin{equation}\label{14}\|f* \mu\|_{\L^r} =\|f* \mu\|_{r}^*=\sup_{h\in {\cal S}(\R^n) :\atop \|h\|_{\L^{r' }}\leq 1}\int_{\R^n} h (x)(f* \mu)(x)\,dx.
\end{equation}
For every $h\in{\cal S}(\R^n)$,
\begin{equation}\label{step2}
	\int_{\R^n} h (x)(f* \mu)(x)\,dx=\int_{\R^n} h (x)\int_{\R^n}f(x-y)\,d\mu (y)\, dx,
	= \int_{\R^n} h  *\tilde f(y)\,d\mu(y) \end{equation}
where $\tilde g(t)=g(-t)$.
	By \eqref{step2} and \eqref{basic},
\begin{equation}\label {13}
	\int_{\R^n} h *\tilde f(y)\,d\mu(y)\leq \| h *\tilde f\|_{\L^{p'}} \| \mu\|_p^*=
	\| \widehat h \,\widehat  f\|_{p}\, \| \mu\|_p^*.
\end{equation}
Recalling that  $1+\frac 1r=\frac 1{p }+\frac 1{q }$, we have
	  $\frac 1{p }=\frac {1}{r }+\frac {1}{q'}$. By H\"older's inequality,
	$$
	\|\widehat h \,\widehat  f\|_{p} \leq \|\,\widehat h\,\|_r \|\,\widehat f\,\|_{q'}=\|h\|_{\L^{r'}} \|f\|_{\L^{q}}
	$$
	By \eqref{14} and \eqref{13}, we conclude that
	$$\|f*\mu\|_{\L^r}\leq \|f\|_{\L^{q}}\| \mu\|_p^*,$$ 
as required.	\end{proof}

\section{Applications}

As mentioned, in this section we present applications of the obtained results.

\subsection{Uncertainty principle}

\quad

In this subsection, we  show that the  uncertainty principle   has its embodiment also for measures. We prove the following
 \begin{theorem}\label{Unc}  A   finite  nonzero measure $\mu\in\M_{2} $  and its  Fourier transform $\widehat \mu$ cannot both be supported in   sets of  finite Lebesgue measure.
\end{theorem}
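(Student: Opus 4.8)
The plan is to argue by contradiction, following the classical Benedicks–Amrein–Berthier scheme adapted to the measure setting, and to use the quantitative result from \cite{AB} that is quoted in Subsection 3.1. Suppose $\mu\in\M_2$ is a finite nonzero measure with $\mu$ supported in a set $E$ of finite Lebesgue measure and $\widehat\mu$ supported in a set $\Sigma$ of finite Lebesgue measure. Since $\mu\in\M_2$ and $1\le 2\le 2$, Proposition \ref{HY} applies and gives $\widehat\mu\in L^2(\R^n)$ with $\|\widehat\mu\|_2\le\|\mu\|_2^*<\infty$; in particular $\widehat\mu$ is a genuine $L^2$ function supported in the finite-measure set $\Sigma$, so $\widehat\mu\in L^1(\R^n)\cap L^2(\R^n)$ by Cauchy–Schwarz. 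Consequently $\mu$ itself must be absolutely continuous with respect to Lebesgue measure: $d\mu=g\,dx$ where $g=(\widehat\mu)^{\vee}\in L^2(\R^n)$, and $g$ is continuous (being the inverse transform of an $L^1$ function).

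The next step is to transfer the support hypotheses to $g$ and its transform. Because $\mu$ is supported in $E$ and $d\mu=g\,dx$, we get $g=0$ a.e. off $E$, hence $g$ is an $L^2$ function supported in the finite-measure set $E$; and $\widehat g=\widehat\mu$ is supported in the finite-measure set $\Sigma$. Thus $g$ is a nonzero $L^2(\R^n)$ function such that both $g$ and $\widehat g$ are supported in sets of finite Lebesgue measure. The main work is now to invoke the quantitative Benedicks-type theorem from \cite{AB}: a nonzero $L^2$ function cannot have both itself and its Fourier transform supported on sets of finite measure. This yields $g\equiv 0$, hence $\mu\equiv 0$, contradicting the assumption that $\mu$ is nonzero.

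The step I expect to be the main obstacle — or at least the one requiring the most care — is the reduction from a general finite measure in $\M_2$ to an $L^2$ function, i.e. establishing that $d\mu=g\,dx$ with $g\in L^2$. One must be sure that membership in $\M_2$ together with $\widehat\mu$ supported on a finite-measure set genuinely forces absolute continuity: the argument is that $\widehat\mu\in L^2$ by Proposition \ref{HY}, then $\widehat\mu\in L^1$ by Hölder on the finite-measure support, so the function $g:=(\widehat\mu)^{\vee}$ is a well-defined bounded continuous $L^2$ function; and by Fourier inversion the tempered distribution $\mu$ and the function $g\,dx$ have the same Fourier transform, so $\mu=g\,dx$ as measures. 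A minor point to handle cleanly is that the conclusion of the cited result in \cite{AB} is usually stated for $L^2$ (or $L^1$) functions on $\R^n$ rather than for measures, so the passage to $g$ is exactly what makes it applicable; once $g$ is produced, the rest is a direct citation. One should also note that the hypothesis $\mu\in\M_2$ is used in an essential way precisely to run Proposition \ref{HY}, which is what upgrades the a priori only-bounded $\widehat\mu$ to an $L^2$ function.
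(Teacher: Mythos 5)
Your proposal is correct, but it takes a genuinely different route from the paper's. The paper proves a quantitative lemma (Lemma \ref{uncertainty}) by dualizing the Amrein--Berthier inequality \eqref{fin} through the pairing \eqref{change}: for $E,F$ of finite Lebesgue measure one gets $\|\mu\|_{2,F}^*\le C\,\|\widehat\mu\|_{L^2(E^c)}$ for every $\mu\in\M_2$; under the support hypotheses the right-hand side vanishes, and Corollary \ref{C1} (the norm property of $\|\cdot\|_2^*$) forces $\chi_F\mu\equiv0$, hence $\mu\equiv0$. You instead reduce to the function case: $\widehat\mu\in L^2$, hence $\widehat\mu\in L^1$ by Cauchy--Schwarz on its finite-measure support, so Fourier inversion gives $d\mu=g\,dx$ with $g=(\widehat\mu)^{\vee}\in L^2$ continuous, and the Amrein--Berthier theorem applied to $g$ finishes the argument. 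Both proofs rest on exactly the same input from \cite{AB}. Your reduction buys a stronger structural conclusion (any such $\mu$ is automatically absolutely continuous with continuous $L^2$ density) and in fact does not really need the hypothesis $\mu\in\M_2$: since $\widehat\mu$ is bounded for any finite measure, its restriction to a finite-measure support lies in $L^2$ automatically, so Proposition \ref{HY} is dispensable and your argument proves the statement for arbitrary finite nonzero measures. What the paper's route buys is a quantitative inequality valid for all $\mu\in\M_2$ irrespective of support, and a proof that stays entirely inside the $\M_p$ framework the section is meant to showcase. The one step of yours that needs careful wording is the identification $\mu=g\,dx$ (injectivity of the Fourier transform on finite measures, obtained from \eqref{change} and density of ${\cal S}(\R^n)$), but that is standard and not a gap.
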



The proof of the theorem relies on the following 
\begin{lemma}\label{uncertainty}
	Let $E , F\subset\R^n$ be    sets   of finite Lebesgue  measure.  There exists a constant $C>0$ such that for every  measure $\mu\in\M_{2}$, we have
	$$\|d\mu\|_{2, F}^*\leq C \|\,\widehat{\mu}\,\|_{L^2( E^c)}.	$$
	\end{lemma}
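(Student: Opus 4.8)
The plan is to deduce the lemma from the quantitative Amrein--Berthier theorem, which is the result of \cite{AB} alluded to in the introduction: if $E,F\subset\R^n$ have finite Lebesgue measure, then there is a constant $C_0=C_0(E,F)$ with
$$\|u\|_{L^2(\R^n)}\le C_0\big(\|u\|_{L^2(F^c)}+\|\,\widehat u\,\|_{L^2(E^c)}\big)\qquad\text{for every }u\in L^2(\R^n),$$
hence in particular $\|u\|_2\le C_0\,\|\,\widehat u\,\|_{L^2(E^c)}$ whenever $\operatorname{supp}u\subset F$. Benedicks' theorem asserts that no nonzero $u\in L^2(\R^n)$ has both $u$ and $\widehat u$ supported in sets of finite measure; the existence of a finite $C_0$ follows from it because the localization operator $Tu=\big(\chi_E\,\widehat{\chi_F u}\,\big)^{\vee}$ has a Hilbert--Schmidt kernel (as $|E|,|F|<\infty$) and is therefore compact, so $\|T\|=1$ would force $T^{*}T$ to have eigenvalue $1$ and hence produce exactly such a forbidden $u$. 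An effective value of $C_0$ is the quantitative refinement supplied by \cite{AB}.

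Granting this, I would first reduce to a statement about $L^2$ functions. A finite $\mu\in\M_2$ is absolutely continuous: by Proposition \ref{HY}, $\widehat\mu\in L^2(\R^n)$, so $\widehat\mu=\widehat g$ for some $g\in L^2(\R^n)$, and since $\F$ is one-to-one on tempered distributions, $d\mu=g\,dx$, with $g\in L^1$ by finiteness and $\widehat\mu=\widehat g$. Unwinding \eqref{Enorm} with $p=2$, using $\widehat L^2(\R^n)=L^2(\R^n)$ isometrically and $\|\chi_F h\|_2\le\|h\|_2$, gives
$$\|d\mu\|_{2,F}^*=\sup_{\|h\|_2\le1}\Big|\int_F h(t)\,g(t)\,dt\Big|=\|\chi_F g\|_2 .$$
Now $\chi_F g$ lies in $L^2(\R^n)$ and is supported in $F$, so Amrein--Berthier applied to it yields $\|\chi_F g\|_2\le C_0\,\|\,\widehat{\chi_F g}\,\|_{L^2(E^c)}=C_0\,\|\,\widehat{\chi_F\mu}\,\|_{L^2(E^c)}$. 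In the case in which the lemma is used in the proof of Theorem \ref{Unc} the measure $\mu$ is supported in $F$, so $\chi_F\mu=\mu$ and $\widehat{\chi_F\mu}=\widehat\mu$, whence $\|d\mu\|_{2,F}^*\le C_0\,\|\,\widehat\mu\,\|_{L^2(E^c)}$, as claimed (with $C=C_0$). Equivalently, without passing to the density one may write $\int_F h\,d\mu=\int\widehat h(\xi)\,\widehat\mu(-\xi)\,d\xi$ by Parseval, split the integral over $E$ and $E^c$, bound the $E^c$-part by $\|h\|_2\,\|\widehat\mu\|_{L^2(E^c)}$, and, for $\mu$ concentrated on $F$, bound the $E$-part using the Benedicks localization bound $\|\widehat v\|_{L^2(E)}\le\theta\|v\|_2$ ($\theta<1$) for $v$ supported in $F$; absorbing the resulting $\theta^2\|d\mu\|_{2,F}^*$ term gives the estimate.

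The only non-routine ingredient is the Amrein--Berthier inequality, and within it the genuinely delicate point is producing a quantitative constant $C_0(E,F)$ valid for sets that are merely of finite measure rather than bounded — the qualitative form ($C_0<\infty$) being the compactness argument above, and its effective form being exactly what is imported from \cite{AB}. Everything else is routine: the absolute continuity of finite measures in $\M_2$ rests on Proposition \ref{HY} and injectivity of $\F$; the identity $\|d\mu\|_{2,F}^*=\|\chi_F g\|_2$ rests on Theorem \ref{funct} and Plancherel; and the final estimate is the Pythagorean splitting $\|\widehat g\|_2^2=\|\widehat g\|_{L^2(E)}^2+\|\widehat g\|_{L^2(E^c)}^2$ combined with the localization bound.
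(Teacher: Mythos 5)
Your argument is correct and reaches the estimate by a genuinely different route from the paper. The paper argues by duality directly on the measure: it quotes the Amrein--Berthier result in the form \eqref{fin}, applies it to the test functions $h$ appearing in the definition of $\|\mu\|_{2,F}^*$, and compares the two resulting suprema. You instead first show that a finite $\mu\in\M_{2}$ has an $L^2$ density $g$ (Proposition \ref{HY} plus injectivity of the Fourier transform on tempered distributions), identify $\|d\mu\|_{2,F}^*=\|\chi_F g\|_2$ via Plancherel, and then apply the strong-annihilation inequality $\|u\|_2\le C_0\bigl(\|u\|_{L^2(F^c)}+\|\widehat u\|_{L^2(E^c)}\bigr)$ to $u=\chi_F g$. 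Your statement of the Amrein--Berthier input is the correct one, and your justification that $C_0<\infty$ (Hilbert--Schmidt kernel, hence compactness, of the localization operator, plus the qualitative annihilation theorem) is standard and sound.

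The essential point --- which you flag yourself --- is that your argument produces $\|d\mu\|_{2,F}^*\le C\,\|\widehat{\chi_F\mu}\,\|_{L^2(E^c)}$ and yields the lemma as stated only when $\mu$ is supported in $F$. This restriction is not a defect of your proof: the lemma as stated for arbitrary finite $\mu\in\M_{2}$ is false. Take $\widehat g\in C_c^\infty$ nonzero and supported in the unit ball $E$, and $d\mu=g\,dx$; then $\widehat\mu=\widehat g$ vanishes on $E^c$, while $g$ extends to a nonzero entire function, so $\|d\mu\|_{2,F}^*=\|\chi_F g\|_2>0$ for every $F$ of positive measure. The same phenomenon (test $f=\chi_E$) shows that \eqref{fin} fails without a support hypothesis on $\widehat f$; the two-term inequality you use is the correct quantitative Amrein--Berthier statement, and it collapses to the one-sided bound only when the relevant support condition holds. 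Since Theorem \ref{Unc} applies the lemma precisely to a $\mu$ supported in $F$, your restricted version is exactly what the application requires; the support hypothesis should appear in the statement of the lemma rather than only inside its proof.
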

	
 	\begin{proof} Recall  the following   quantitative form of an uncertainty principle result obtained by Amrein and Berthier in \cite{AB}:
{\it  Let $E, \ F \subset\R^n$ be sets  of finite measure.
	There exists a constant $C>0$ such that for every function} $f\in L^2(\R^n)$,   
\begin{equation}\label{fin}	\|\,\widehat f\,\|_{L^2(F)}\leq  C \|f\|_{L^2(E^c)}.\end{equation}
Let $h\in L^2(\R^n)$. By (\ref{fin}), 
the inequality $\| h\|_{L^2(E^c)} \leq 1 $ yields 
$\|\,\widehat h\,\|_{L^2(F)}\leq  C$.   In view of	 \eqref{change}, we can write the following chain of inequalities:
	 \begin{align*} & \|\,\widehat {\mu}\,\|_{L^2(E^c)}=\sup_{h\in  L^2(\R^n) : \atop \| h\|_{L^2(E^c)}\leq 1} \int_{\R^n }    h (x) \widehat {\mu}(x)\\
	 	=&\sup_{h\in  L^2(\R^n) : \atop \| h\|_{L^2(E^c)}\leq 1} \int_{\R^n }    \widehat h (y) \,  {d\mu}(y)
	 	\ge \sup_{h\in  L^2(\R^n) : \atop \| \,\widehat h\,\|_{L^2(F )}\leq C} \int_{\R^n }    \widehat h (x) \,  {d\mu}(x)\\
	 	=&\frac 1C  \sup_{k\in  L^2(\R^n) :\atop \|\, \widehat k\,\|_{2}\leq 1} \int_{F }    k (x)   {d\mu}(x)=\frac 1C\| {\mu}\|_{2,F  }^*, \end{align*}
	 obtaining the required result.   \end{proof}

\begin{proof}[Proof of Theorem \ref{Unc}]
Assume by contradiction that   $\mu$   is supported in $F$ and  $\widehat \mu$  is supported in $E $, where $E, \ F \subset\R^n$ are
both  of finite measure. By  Lemma \ref{uncertainty}, we have $\|\mu\|^*_{2,F}=\|\chi_F\mu\|^*_{2 }=0$, and  Corollary \ref{C1} yields   $\chi_F\mu\equiv 0$.
Since $\chi_{F^c}\mu\equiv 0$ is assumed, we have $\mu=0$, which is a contradiction. 	\end{proof}

\medskip

\subsection{The Fourier transform theorem}

\quad

In order to reveal an analogy to the case of absolutely continuous $f$, we prove a counterpart of corresponding embeddings in (\ref{embs1}).

\begin{prop}\label{embvps} {\it For $p_1>p_2>1$, there holds}

$$ V^*_{p_1} \hookrightarrow V^*_{p_2}.$$     \end{prop}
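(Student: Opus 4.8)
The goal is to show $V^*_{p_1}\hookrightarrow V^*_{p_2}$ for $p_1>p_2>1$, i.e. $\|f\|_{V^*_{p_2}}\lesssim\|f\|_{V^*_{p_1}}$. Recalling the definition
$$\|f\|_{V_p^*}=\int_0^\infty x^{-\frac1p}\,\|\chi_{(x,2x)}\mu_f\|_p^*\,dx,$$
the entire problem localizes to comparing the single-scale quantities $\|\chi_{(x,2x)}\mu_f\|_{p_2}^*$ and $\|\chi_{(x,2x)}\mu_f\|_{p_1}^*$ for each fixed $x>0$, and then integrating against the correct power weight. The plan is to apply Corollary \ref{incl1} (the inclusion $\M_{r,E}\subset\M_{p,E}$ for a bounded set $E$), but more precisely its quantitative form \eqref{ineqE}, with $E=(x,2x)$, $p=p_2$, $r=p_1$.

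First I would fix $x>0$ and set $E=(x,2x)$, which is contained in the cube $Q_R$ with $R=2x$ (in dimension $n=1$). The inequality \eqref{ineqE} from the proof of Corollary \ref{incl1} gives, with $q=\frac{p_1p_2}{p_1-p_2}$,
$$\|\chi_{(x,2x)}\mu_f\|_{p_2}^*\le C_q\,R^{\frac1{q'}}\,\|\chi_{(x,2x)}\mu_f\|_{p_1}^*=C_q\,(2x)^{\frac1{q'}}\,\|\chi_{(x,2x)}\mu_f\|_{p_1}^*,$$
where $C_q$ is the absolute constant $\left(\frac{2q'}{\pi}\right)^{1/q}$ and is independent of $x$. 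The key bookkeeping point is the exponent: since $\frac1q=\frac1{p_2}-\frac1{p_1}$, we have $\frac1{q'}=1-\frac1{p_2}+\frac1{p_1}$, hence
$$-\frac1{p_2}+\frac1{q'}=1-\frac2{p_2}+\frac1{p_1}.$$
Hmm — this does not immediately match $-\frac1{p_1}$, so a direct pointwise substitution of \eqref{ineqE} into the integral will not by itself close the estimate; one also needs to exploit that $\mu_f$ is a fixed measure and interpolate the single-scale norms at the two endpoints more carefully, or instead use that $\|\chi_{(x,2x)}\mu_f\|_{p}^*$ is itself controlled in terms of the $V^*$-integrand structure.

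The cleaner route, which I would actually carry out, is to avoid \eqref{ineqE} directly and instead argue that for a measure supported in an interval of length comparable to $x$, the two norms $\|\chi_{(x,2x)}\mu_f\|_{p_2}^*$ and $\|\chi_{(x,2x)}\mu_f\|_{p_1}^*$ differ by a factor that is a fixed power of $x$ whose exponent is \emph{exactly} $\frac1{p_2}-\frac1{p_1}$. Concretely: in the definition of $\|\cdot\|_{p_2,E}^*$ one tests against $h$ with $\|h\|_{\widehat L^{p_2'}}\le1$; on the bounded set $E=(x,2x)$ one replaces $h$ by $h\cdot\chi_{Q_{2x}}$ and writes $\widehat{h\chi_{Q_{2x}}}=\widehat h*\widehat\chi_{Q_{2x}}$, then applies Young's inequality to land in $\widehat L^{p_1'}$, picking up exactly $\|\widehat\chi_{Q_{2x}}\|_s$ for the appropriate $s$ with $\|\widehat\chi_{Q_{2x}}\|_s=C_s(2x)^{1/s'}$ and $\frac1{s'}=\frac1{p_2}-\frac1{p_1}$ when the exponents are chosen so that $q=s$ in the Hölder step. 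Then
$$\|f\|_{V^*_{p_2}}=\int_0^\infty x^{-\frac1{p_2}}\|\chi_{(x,2x)}\mu_f\|_{p_2}^*\,dx\le C_s 2^{1/s'}\int_0^\infty x^{-\frac1{p_2}+\frac1{s'}}\|\chi_{(x,2x)}\mu_f\|_{p_1}^*\,dx=C_s 2^{1/s'}\|f\|_{V^*_{p_1}},$$
since $-\frac1{p_2}+\frac1{s'}=-\frac1{p_2}+\frac1{p_2}-\frac1{p_1}=-\frac1{p_1}$, which is exactly the weight in $\|f\|_{V^*_{p_1}}$.

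\textbf{Main obstacle.} The genuinely delicate point is getting the exponent arithmetic to close, i.e. verifying that the power of $x$ produced by the localized Hausdorff--Young/Young argument is precisely $x^{1/p_2-1/p_1}$ and not something with an extra additive constant (as the naive use of \eqref{ineqE} suggested). This forces one to be careful about which Young exponents to choose: one should \emph{not} pull $\chi_{Q_{2x}}$ out via $\widehat L^q$ in the way \eqref{ineqE} does, but rather absorb it directly in the convolution so the scaling is $(2x)^{1/s'}$ with $s'$ determined by $\frac1{s'}=\frac1{p_2}-\frac1{p_1}$. Everything else — Young's inequality for convolution, the Hausdorff--Young inequality, and the explicit value $\|\widehat\chi_{Q_R}\|_s=C_s R^{1/s'}$ — is already available in the excerpt (Theorem \ref{Holder} and the computation inside Corollary \ref{incl1}), so once the exponent is pinned down the proof is a short chain of inequalities followed by an interchange of the scale integral with the supremum.
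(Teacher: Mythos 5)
Your proof is correct and is essentially the paper's own argument: the authors likewise apply the quantitative form of Corollary \ref{incl1} with $E=(x,2x)$, but they bound $\|\chi_{Q_R}\|_{\L^q}\lesssim x^{1/q}$ via \eqref{p-main} (the exponent $R^{n/q'}$ displayed in \eqref{ineqE} is a slip for $R^{n/q}$), so the weight $x^{-1/p_2+1/q}=x^{-1/p_1}$ closes directly and the detour you feared is unnecessary. Your ``cleaner route'' --- absorbing $\chi_{Q_{2x}}$ into the convolution with Young exponent $s=q'$, yielding the gain $(2x)^{1/s'}=(2x)^{1/p_2-1/p_1}$ --- is precisely the computation underlying Theorem \ref{Holder} and \eqref{ineqE}, so the two arguments coincide.
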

\begin{proof} We are going to apply Corollary \ref{incl1}. Since for $E=(x,2x)$, we have in (\ref{ineqE}) that by (\ref{p-main}), there holds
$$ \|\chi_{Q_R}\|_{\L^q } \lesssim x^{\frac1q},$$    and it follows that
$$\|\mu_f \|_{p_2, (x, 2x) }^*\lesssim x^{\frac1q} \|\mu_f \|_{p_1, (x, 2x) }^*.$$

The corresponding relation $\frac1{p_2}=\frac1q+\frac1{p_1}$ yields $\frac1q=\frac{p_1-p_2}{p_1p_2}$.
It remains to observe that
$$ x^{-\frac1{p_2}}\,x^{\frac{p_1-p_2}{p_1p_2}}=x^{-\frac1{p_1}},$$
which leads to the needed embedding.  \end{proof}

With these embeddings and the tools elaborated before, we study, for $\gamma=0$ or $\frac14$, the Fourier transforms
\begin{eqnarray}\label{ftmain} \widehat {f}_\gamma(x)=\int_0^\infty f(t)\cos2\pi(xt-\gamma)\,dt.   \end{eqnarray}
It is clear that $\widehat {f}_\gamma$ represents the cosine Fourier transform in the case $\gamma=0$, while taking $\gamma=\frac14$ gives the sine Fourier transform.

\begin{theorem}\label{main} Let $f$ be of bounded variation on $\R_+$ and vanishing at infinity, that is,
	$\lim\limits_{t\to\infty}f(t)=0$. If $f\in V_p^*$, then for $x>0$, we have
	
	\begin{eqnarray*} \widehat {f}_\gamma(x)= \frac{1}{2\pi x}f\Big(\frac1x\Big)\sin2\pi\gamma+\Gamma(x),\end{eqnarray*}
	where $\gamma=0$ or $\frac14$, and $\|\Gamma\|_{L^1(\bf R_+)}\lesssim\|f\|_{V_p^*}$ provided that the last value is finite for some $p$, $1<p\le\infty$. \end{theorem}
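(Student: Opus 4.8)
The plan is to start, as suggested in the introduction, by integrating by parts in the Stieltjes sense in \eqref{ftmain}. Writing $\cos2\pi(xt-\gamma)=\cos2\pi xt\cos2\pi\gamma+\sin2\pi xt\sin2\pi\gamma$ and using $\lim_{t\to\infty}f(t)=0$ together with the boundedness of $f$ near $0$, one obtains
$$\widehat f_\gamma(x)=-\frac{1}{2\pi x}\int_0^\infty\bigl(\sin2\pi xt\cos2\pi\gamma-\cos2\pi xt\sin2\pi\gamma\bigr)\,df(t)+(\text{boundary term at }0),$$
where the only surviving boundary contribution is $\frac{1}{2\pi x}f(0^+)\sin2\pi\gamma$ if $f(0^+)$ exists; but the cleaner route is to split the $t$-integral at $t=1/x$ and keep the boundary term at that point, which produces exactly $\frac{1}{2\pi x}f(1/x)\sin2\pi\gamma$, the first term in the asserted formula. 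Everything else is collected into $\Gamma(x)$, which after this manipulation is a sum of integrals against $df$ of the form $\frac{1}{x}\int_0^\infty k(xt)\,df(t)$ with $k$ a bounded oscillatory kernel (sine or cosine) and with a cut-off separating $t\le 1/x$ from $t>1/x$.

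The core of the argument is then to estimate $\|\Gamma\|_{L^1(\R_+)}$ by $\|f\|_{V_p^*}$. I would do this by a dyadic decomposition of the $t$-axis relative to $x$: write $\R_+=\bigcup_k\{2^{-k-1}<xt\le 2^{-k}\}$ (equivalently decompose $df$ according to the dyadic annuli $(2^{j},2^{j+1})$ in the $t$-variable and track how $x$ interacts with each). On each annulus $(y,2y)$ one estimates $\bigl|\int_{(y,2y)}k(xt)\,df(t)\bigr|$ by pairing $d\mu_f$ against the function $t\mapsto k(xt)\chi_{(y,2y)}(t)$, and applying the defining inequality \eqref{basic} of $\|\cdot\|_p^*$: this gives a bound $\|k(x\cdot)\chi_{(y,2y)}\|_{\widehat L^{p'}}\,\|\chi_{(y,2y)}\mu_f\|_p^*$. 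The first factor is controlled by scaling and by part a) of Corollary \ref{Propset} (together with the fact that $k$ is smooth and bounded, so multiplying by $k(x\cdot)$ costs only a constant in the relevant norm, much as in the proof of Corollary \ref{incl1}), yielding $\|k(x\cdot)\chi_{(y,2y)}\|_{\widehat L^{p'}}\lesssim y^{1/p'}$ up to the extra $x$-dependence coming from the oscillation, which is what makes the geometric series in $k$ converge. Summing the resulting estimates over the dyadic pieces and integrating in $x$, after the change of variables that turns $\int_0^\infty x^{-1/p}\|\chi_{(x,2x)}\mu_f\|_p^*\,dx$ into the ambient quantity, reproduces $\|f\|_{V_p^*}$; this is the step where one must be careful that the bookkeeping of the $x$-integration and the dyadic sum in $t$ decouples correctly, and it is essentially the measure-valued analogue of the Hausdorff-Young argument of \cite{GM1} used for \eqref{fceop}.

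For $1<p\le 2$ the first factor is handled directly through the new Hausdorff-Young inequality, Proposition \ref{HY}, exactly as in \cite{GM1}: one bounds the $\widehat L^{p'}$-norm of the localized oscillatory kernel by its $L^{p'}$-norm and computes. For $p>2$ the Hausdorff-Young inequality is no longer available in that direction, and here I would instead invoke the embedding $V^*_{p}\hookrightarrow V^*_{q}$ of Proposition \ref{embvps} for $2<q<p$ together with Proposition \ref{embvps} again (or directly Corollary \ref{incl1}) to reduce to a value of the index in $(1,2]$, then apply the case already proved — this is precisely the "embedding argument" advertised at the end of the introduction.

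The main obstacle I anticipate is not any single estimate but getting the right splitting of $\widehat f_\gamma$ so that the explicit main term is exactly $\frac{1}{2\pi x}f(1/x)\sin2\pi\gamma$ with an error $\Gamma$ that is genuinely in $L^1$ with the stated norm bound: one must check that the "near-diagonal" contribution (the dyadic block with $xt\asymp 1$) does not itself generate a non-$L^1$ term and that the boundary term extracted at $t=1/x$ accounts for all of the non-integrable behavior. In the absolutely continuous case this is handled in \cite{GM1,L0}; the new point is to verify that replacing $f'\,dt$ by $df$ and the $O_p$ norm by $\|\cdot\|_p^*$ on dyadic annuli preserves the argument, which is exactly what Corollaries \ref{incl1} and \ref{Propset} and Propositions \ref{HY} and \ref{embvps} were set up to provide.
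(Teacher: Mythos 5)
Your overall skeleton matches the paper's: integrate by parts with the split at $t=1/x$ so that the boundary term produces $\frac{1}{2\pi x}f(1/x)\sin 2\pi\gamma$, localize $df$ to dyadic-type intervals $(u,2u)$, use the measure Hausdorff--Young inequality of Proposition \ref{HY} for $1<p\le 2$, and reduce $p>2$ to that case via the embedding of Proposition \ref{embvps}. However, the central estimate is set up in a way that cannot work as stated. You propose to bound, for each fixed $x$, the pairing $\bigl|\int_{(y,2y)}k(xt)\,d\mu_f(t)\bigr|$ by $\|k(x\cdot)\chi_{(y,2y)}\|_{\L^{p'}}\,\|\chi_{(y,2y)}\mu_f\|_p^*$ via \eqref{basic}, and then rely on ``the extra $x$-dependence coming from the oscillation'' to make the $x$-integral converge. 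But there is no pointwise-in-$x$ decay to exploit: if $\mu_f$ contains a point mass in $(y,2y)$, then $\bigl|\int_{(y,2y)}e^{2\pi ixt}\,d\mu_f(t)\bigr|$ stays bounded below uniformly in $x$, while the prefactor $\frac1x$ is not integrable at infinity. The cancellation coming from oscillation is only visible after an integration in $x$. The paper's resolution is to apply H\"older's inequality \emph{in the $x$-variable} first, using that $\int_{1/u}^\infty x^{-p}\,dx\asymp u^{p-1}$ for $p>1$, which isolates the quantity $\bigl(\int_0^\infty\bigl|\int_u^{2u}\sin2\pi(xt-\gamma)\,df(t)\bigr|^{p'}dx\bigr)^{1/p'}\lesssim\|\widehat{\chi_{(u,2u)}\mu_f}\|_{p'}$; only then does Proposition \ref{HY} (applied to the measure $\chi_{(u,2u)}\mu_f$, not to the kernel) give the bound $\|\chi_{(u,2u)}\mu_f\|_p^*$, and the weight $u^{-1/p}$ left over from H\"older reproduces exactly the $V_p^*$ norm. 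Your version applies a norm estimate to the wrong object (the localized kernel rather than the localized measure) at the wrong stage.

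A second missing ingredient: both the near-origin error $O\bigl(\int_0^{1/x}s\,|df(s)|\bigr)$ produced by the integration by parts and the near-diagonal block $\int_{1/x}^{2/x}|df(t)|$ (which you correctly flag as a danger) are controlled in the paper by the inequality $\int_0^\infty|df(s)|\lesssim\|f\|_{V_p^*}$ (Lemma \ref{embst}). This is not automatic from the definitions: it requires constructing an explicit test function $h=x^{-1/p'}\operatorname{sign}df$ on $(x,2x)$ and verifying that $h\in\L^{p'}(\R)$, i.e., that $h$ is the Fourier transform of an $L^p$ function, which is done by a direct computation with a second integration by parts. Without this lemma, your proposal has no way to absorb the non-oscillatory error terms into $\|f\|_{V_p^*}$, so this step needs to be supplied rather than deferred.
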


\begin{proof} Splitting the integral in (\ref{ftmain}) and integrating by parts, we obtain

\begin{align*} \widehat {f}_\gamma(x)&=-\frac{1}{2\pi x}f\Big(\frac1x\Big)\sin2\pi(1-\gamma)\\
+\int_0^{\frac1x} f(t)\cos2\pi(xt-\gamma)\,dt &-\frac{1}{2\pi x}\int_{\frac1x}^\infty
\sin2\pi(xt-\gamma)\,df(t).   \end{align*}
Further,
\begin{align*}&\quad\int_0^{\frac1x} f(t)\cos2\pi(xt-\gamma)\,dt\\
&=\int_0^{\frac1x}[ f(t)-f\Big(\frac1x\Big)] \cos2\pi(xt-\gamma)\,dt+\int_0^{\frac1x} f\Big(\frac1x\Big)\cos2\pi(xt-\gamma)\,dt\\
&=-\int_0^{\frac1x}\Big[\int_t^{\frac1x}\,df(s)\Big]\cos2\pi(xt-\gamma)\,dt\\&+\frac{1}{2\pi x}f\Big(\frac1x\Big)
\sin2\pi(1-\gamma)+\frac{1}{2\pi x}f\Big(\frac1x\Big)\sin2\pi\gamma\\
&=\frac{1}{2\pi x}f\Big(\frac1x\Big)\sin2\pi\gamma+\frac{1}{2\pi x}f\Big(\frac1x\Big) \sin2\pi(1-\gamma)+O\biggl(\int_0^{\frac1x}s|df(s)|\,\biggr).
\end{align*}
To continue the proof, we need the following

\begin{lemma}\label{embst} We have the inequality
\begin{eqnarray}\label{embvar} \int_0^\infty|df(s)|\lesssim \|f\|_{V_p^*}.\end{eqnarray}
\end{lemma}

\begin{proof}  There holds
\begin{align*} \ln2\,\int_0^\infty|df(s)|&=\int_0^\infty\frac1x\int_x^{2x}|df(s)|\,dx\\
&=\int_0^\infty\, x^{-\frac1p}\biggl|\int_x^{2x}h(s)\,df(s)\biggr|\,dx,\end{align*}
where $\displaystyle{h(s)=x^{-\frac1{p'}}{\rm sign}\,df(s)}$ if $x<s<2x$ and zero otherwise. This $h$ is not necessarily of
bounded variation; however, since it will always be under the integral sign, we can take an equivalent function
that is of bounded variation. This is possible because the number of jumps of $f$ is of measure zero. We will
continue to use notation $h$ for such a function. It is easy to see that $\|h\|_{p'}=1.$ Let $g(u)=\widehat{h}(u).$ We have
\begin{align*} \int_0^\infty |g(u)|^p\,du&=\biggl(\int_0^{\frac1x}+\int_{\frac1x}^\infty\biggr)|g(u)|^p\,du\\
\lesssim \frac1x \left(\frac{x}{x^{\frac1{p'}}}\right)^p&+x^{-\frac{p}{p'}}\int_{\frac1x}^\infty\,\biggl(\,\biggl|\,h(s)\frac{e^{-ius}}{-iu}\Big|_x^{2x}\biggr|
+\frac1{u}\int_x^{2x} |dh(s)|\,\biggr)^p\,du.\end{align*}
The first term on the right is bounded. Since
$$\int_{\frac1x}^\infty \frac{du}{u^p}\lesssim x^{\frac1{p'}},$$
the definition of $h$ leads to the boundedness of the second term as well.

Therefore, $h$ is the Fourier transform of an $L^p$ function $g$. This leads to the needed right-hand side in (\ref{embvar}).        \end{proof}

\medskip

We return to the proof of the theorem. Since
\begin{eqnarray*}\int_0^\infty\int_0^{\frac1x}s|df(s)|\,dx =\int_0^\infty|df(s)|,       \end{eqnarray*}
it follows from (\ref{embvar}) that to prove the theorem it remains to estimate
\begin{eqnarray*}\int_0^\infty\frac{1}{x}\left|\,\int_{\frac1x}^\infty \sin2\pi(xt-\gamma)\,df(t)\right|\,dx.   \end{eqnarray*}
We have
\begin{align*}  {\ln2}&\int_0^\infty\frac{1}{x}\biggl|\,\int_{\frac1x}^\infty\sin2\pi(xt-\gamma)\,df(t)\biggr|\,dx\\
\le&\int_0^\infty\frac{1}{u}\int_{\frac1u}^\infty\frac{1}{x}\left|\int_u^{2u}\sin2\pi(xt-\gamma)\,df(t)\right|\,dx\,du+
\ln2\int_0^\infty\frac{1}{x}\int_{\frac1x}^{\frac2x}|df(t)|\,dx.
   \end{align*}
The latter summand on the right is controlled by $\int_0^\infty|df(t)|$. Applying H\"older's inequality to the integral in $x$ of the
first summand, we have to estimate
\begin{align}\nonumber&\int_0^\infty\frac{1}{u}\biggl(\int_{\frac1u}^\infty  x^{-p}dx\biggr)^{\frac1p}\,\biggl(\int_0^\infty
\biggl|\int_u^{2u}\sin2\pi(xt-\gamma)\,df(t)\biggr|^{p'}\,dx\biggr)^{\frac1{p'}}\,du
\\\label{B1}
&=\int_0^\infty u^{-\frac 1p} I(u)\, du.
\end{align}
where by $I(u)$ the term in the second parenthesis is denoted.  We can see that
\begin{align*}
I &=\frac 12\left( \int_0^\infty
\left|\int_\R \left(e^{2\pi i(xt-\gamma)}-e^{-2\pi i(xt-\gamma)}\right)\,\chi_{(u, 2u)}(t)\,df(t)\right|^{p'}\,dx\right)^{\frac1{p'}}\\
&=\frac 12\left( \int_0^\infty\left|  \, e^{2\pi i\gamma} \widehat{ \chi_{(u, 2u)}\mu_f} (x)  -e^{-2\pi i\gamma}
\widehat{ \chi_{(u, 2u)}\mu_f}(-x)\,\right|^{p'}\,dx\right)^{\frac1{p'}}\\
&\leq \|\widehat{ \chi_{(u, 2u)}\mu_f}\|_{p'}.
\end{align*}
For $1<p\le2$, applying the   Hausdorff-Young  inequality (\ref {HY-ineq}), we obtain
  $$
 I\leq  \|\widehat{ \chi_{(u, 2u)}\mu_f}\|_{p'} \leq  \|  \chi_{(u, 2u)}\mu_f \|_{p }^*,
$$
from which we derive that \eqref{B1} is bounded by
$$
\int_0^\infty u^{-\frac 1{p}} \|\mu_f \|_{p, (u, 2u) }^*\,du,
$$
as desired. For $p>2$, Proposition \ref{embvps} completes the proof.      \end{proof}

\begin{remark} There exist analogs of (\ref{fceop}) for the multivariate setting; see, e.g., \cite{L0} or \cite{L2019}.
However, the above one-dimensional result is more transparent and illustrative in the sense that extending it to several dimensions
is a plain business with awkward notation and technicalities. \end{remark}

\bigskip


\begin{thebibliography}{99}
	
	\bibitem{AB}
W.O. Amrein and A.M. Berthier, {\it On support properties of $L^{p}$-functions and their Fourier transforms}, J. Funct. Anal. {\bf24} (1977), 258--267.
	
	
	
	\bibitem{Ball} K. Ball, {\it Cube slicing in $\mathbb R^n$}, Proc. Amer. Math. Soc. {\bf 97} (1986), 465--473.
	
	\bibitem{BN} P. L. Butzer and R. J. Nessel, {\it Fourier Analysis and Approximation. Volume 1. One-Dimensional Theory}, Academic Press, New York and London, 1971.
	
	\bibitem{CMR} J.A. Cima, A.L. Matheson and W.T. Ross, {\it The Cauchy transform},
	Mathematical Surveys and Monographs, {\bf 125}, Amer. Math. Soc., Providence, RI, 2006.
	
	
	\bibitem{Fr} S. Fridli, {\it Hardy Spaces Generated by an Integrability Condition}, J. Approx. Theory {\bf 113} (2001), 91--109.
	
	\bibitem{FS} G. Folland and A. Sitaram, {\it The Uncertainty Principle}, J. Fourier Anal. Appl. {\bf 3} (1997), 207--238.
	\bibitem{GM1} D.V. Giang and F. M\'oricz, {\it On the $L^1$ theory of Fourier transforms and multipliers}, Acta Sci. Math. (Szeged) {\bf 61} (1995), 293--304.
	
	\bibitem{IL} A. Iosevich and E. Liflyand, {\it Decay of the Fourier transform: analytic and geometric aspects}, Birkhauser, 2014.
	
	\bibitem{H} P.R. Halmos, {\it Measure Theory}, Van Nostrand, New York, 1950
	
	\bibitem{HJ} V.P. Havin and B. J\"oricke, {\it The Uncertainty Principle in Harmonic Analysis}, Springer-Verlag, Berlin, 1994.
	
	\bibitem{Kor} T.W. K\"orner, {\it Fourier transforms of distributions and Hausdorff measures}, {\bf20} (2014), 547--565.
	
	\bibitem{L0} E. Liflyand, {\it Fourier transforms of functions from certain classes}, Anal. Math. {\bf 19} (1993),  151--168.
	
	\bibitem{L2019} E. Liflyand, {\it Functions of Bounded Variation and their Fourier Transforms}, Birkh\"auser, 2019.
	
	\bibitem{MaPo} B. Makarov and A. Podkorytov, {\it Real Analysis: Measures, Integrals and Applications}, Springer, 2013.
	
	\bibitem{RS} H. Reiter and J.D. Stegeman, {\it Classical harmonic analysis and locally compact groups. Second edition},
	London Mathematical Society Monographs. New Series, 22. The Clarendon Press, Oxford University Press, New York, 2000.
	
	
	
	\bibitem{WW} N. Wiener and A. Wintner, {\it Fourier-Stieltjes Transforms and Singular Infinite Convolutions}, Amer. J. Math. {\bf 60} (1938), 513--522.
	
\end{thebibliography}
\end{document}